\begin{document}

\title{Closed form expressions for Appell polynomials}

%\titlerunning{Monotone and fast computation of Euler's constant}        % if too long for running head

\author{Jos\'{e} A. Adell         \and
        Alberto Lekuona
}

%\authorrunning{Short form of author list} % if too long for running head

\institute{Jos\'{e} A. Adell \at
              Departamento de M\'{e}todos Estad\'{\i}sticos, Facultad de
Ciencias, Universidad de Zaragoza, 50009 Zaragoza (Spain)\\
              \email{adell@unizar.es}           %  \\
           \and
           Alberto Lekuona \at
             Departamento de M\'{e}todos Estad\'{\i}sticos, Facultad de
Ciencias, Universidad de Zaragoza, 50009 Zaragoza (Spain)\\
              \email{lekuona@unizar.es}
}

\date{Received: date / Accepted: date}
% The correct dates will be entered by the editor

\maketitle

\begin{abstract}

We show that any Appell sequence can be written in closed form as a forward difference transformation of the identity. Such transformations are actually multipliers in the abelian group of the Appell polynomials endowed with the operation of binomial convolution. As a consequence, we obtain explicit expressions for higher order convolution identities referring to various kinds of Appell polynomials in terms of the Stirling numbers. Applications of the preceding results to generalized Bernoulli and Apostol-Euler polynomials of real order are discussed in detail.

\keywords{Appell polynomials \and binomial convolution \and forward difference transformation \and higher order convolution identities \and generalized Bernoulli polynomials \and generalized Apostol-Euler polynomials.
}
% \PACS{}
\subclass{11B68 \and 33C45 \and 60E05}
\end{abstract}

\section{Introduction}\label{s1}

Given an arbitrary function $f:\mathds{R}\to \mathds{R}$, the usual $k$th forward difference of $f$ is defined as
\begin{equation*}
\Delta^kf(x)=\sum_{j=0}^k \binom{k}{j}(-1)^{k-j} f(x+j),\quad x\in \mathds{R},\quad k=0,1,2,\ldots .
\end{equation*}
For any $x\in \mathds{R}$ and $n=0,1,2,\ldots $, denote by $B_n(x)$ and $E_n(x)$ the classical Bernoulli and Euler polynomials of degree $n$, respectively, and by $I_n(x)=x^n$. The starting point of this work are the following well known explicit expressions of such polynomials
\begin{equation}\label{eq1.1}
B_n(x)=\sum_{k=0}^n \dfrac{(-1)^k}{k+1}\Delta^k I_n(x),\qquad E_n(x)=\sum_{k=0}^n \dfrac{(-1)^k}{2^k}\Delta^k I_n(x).
\end{equation}
Closed form expressions for other Appell polynomials, usually in terms of the Stirling numbers of the second kind, are known in the literature. Among others, we mention the explicit expressions for the generalized Bernoulli polynomials of complex order (c.f. Todorov \cite{Tod1985}, Srivastava and Todorov \cite{SriTod1988}, and Guo and Qi \cite{GuoQi2014}), the generalized Apostol-Bernoulli polynomials of integer order (c.f. Luo and Srivastava \cite{LuoSri2005}), the generalized Apostol-Euler polynomials of real order (c.f. Luo \cite{Luo2006}), and the generalized Apostol-Bernoulli and Apostol-Euler polynomials of integer order (c.f. Kim \textit{et al.} \cite{KimKimDolRim2013}).

The aim of this paper is twofold. In first place, to show that any Appell sequence $A(x)=(A_n(x))_{n\geq 0}$ can be written in closed form as
\begin{equation}\label{eq1.0.3}
  A_n(x)=\sum_{k=0}^n \dfrac{a_k}{k!}\Delta^k I_n(x),\qquad n=0,1,\ldots ,\quad x\in \mathds{R},
\end{equation}
where the sequence of real numbers $\boldsymbol{a}=(a_n)_{n\geq 0}$ is unique and, for this reason, may be called the sequence associated to $A(x)$. This is done in section~\ref{s5}. In a recent paper, Boutiche \textit{et al.} \cite{BouRahSri2017} define the generalized Stirling numbers $S_n^k(x)$ as
\begin{equation}\label{eq1.0.4}
    S_n^k(x)=\dfrac{\Delta^k I_n(x)}{k!},\quad n,k=0,1,\ldots ,\quad k\leq n,\ x\in \mathds{R},
\end{equation}
and show that the generalized Bernoulli and Euler polynomials of complex order can be written in terms of such numbers. In this respect, our results in Section~\ref{s5}, particularly Theorem~\ref{th5.7}, appear as generalizations of the results in \cite{BouRahSri2017}.

Of course, the main problem is to give an explicit expression of the sequence $\boldsymbol{a}=(a_n)_{n\geq 0}$ as simple as possible. In this regard, we have included three preliminary sections. Section~\ref{s2} contains some basic results on binomial convolutions of Appell sequences already shown in \cite{AdeLek2017.2}. In Section~\ref{s3}, we introduce the notion of forward difference transformation of an Appell sequence, so that formula \eqref{eq1.0.3} states that any Appell sequence $A(x)$ is a forward difference transformation  of the identity. We have also included in Section~\ref{s4} some properties of the Stirling numbers, because the sequence $\boldsymbol{a}=(a_n)_{n\geq 0}$ may be described in terms of them. Finally, applications of formula \eqref{eq1.0.3} to generalized Bernoulli and Apostol-Euler polynomials of real order are given in Theorem~\ref{th6.1} and Theorem~\ref{th7.1} in sections~\ref{s6} and \ref{s7}, respectively.

In second place, to show that formula \eqref{eq1.0.3} can be used to obtain explicit expression for general convolution identities referring to various kinds of Appell polynomials. More precisely, we give formulas in closed form for
\begin{equation}\label{eq1.0.1}
  \sum_{j_1+\cdots +j_m=n}\dfrac{n!}{j_1!\cdots j_m!} A_{j_1}(x_1)\cdots A_{j_m}(x_m),\qquad n,m=1,\ldots ,
\end{equation}
where $A(x)$ are the Bernoulli or the Apostol-Euler polynomials (see Theorem~\ref{th6.11}, Theorem~\ref{th7.13}, and Theorem~\ref{th7.14} in sections~\ref{s6} and \ref{s7}).

In this respect, a classical identity of second order concerning the Bernoulli polynomials, which goes back to N\"orlund \cite{Nor1922}, is the following
\begin{equation}\label{eq1.0.2}
  \sum_{k=0}^n \binom{n}{k}B_k(x)B_{n-k}(y)=-n(x+y-1)B_{n-1}(x+y)-(n-1)B_n (x+y),
\end{equation}
where $n=1,2,\ldots $. Since the pioneering work by Dilcher \cite{Dil1996}, many authors have provided different expressions for the sums in \eqref{eq1.0.1}. See, for instance, Wang \cite{Wan2013}, Agoh and Dilcher \cite{AgoDil2014}, He and Araci \cite{HeAra2014}, Wu and Pan \cite{WuPan2014}, He \cite{He2017}, Dilcher and Vignat \cite{DilVig2016}, and the references therein. As it happens in \eqref{eq1.0.2}, a common feature of the identities for the sums in \eqref{eq1.0.1} usually found in the literature is that the right-hand side of such identities contains the polynomials $A(x)$ themselves. This can be avoided in many usual cases, particularly, when $A(x)$ are the Bernoulli and the Apostol-Euler polynomials, as shown in Sections~\ref{s6} and \ref{s7}.

\section{Binomial convolution of Appell sequences}\label{s2}

Let $\mathds{N}$ be the set of  positive integers and $\mathds{N}_0=\mathds{N}\cup \{0\}$. Unless otherwise specified, we assume from now on that $n\in \mathds{N}_0$, $x\in \mathds{R}$, and $z\in \mathds{C}$ with $|z|\leq r$, where $r>0$ may change from line to line. Denote by $\mathcal{G}$ the set of all real sequences $\boldsymbol{u}=(u_n)_{n\geq 0}$ such that $u_0\neq 0$ and
\begin{equation*}
\sum_{n=0}^\infty |u_n|\dfrac{r^n}{n!}<\infty,
\end{equation*}
for some radius $r>0$. If $\boldsymbol{u}\in \mathcal{G}$, we denote its generating function by
\begin{equation*}
G(\boldsymbol{u},z)=\sum_{n=0}^\infty u_n \dfrac{z^n}{n!}.
\end{equation*}
If $\boldsymbol{u},\boldsymbol{v}\in \mathcal{G}$, the binomial convolution of $\boldsymbol{u}$ and $\boldsymbol{v}$, denoted by $\boldsymbol{u}\times \boldsymbol{v}=((u\times v)_n)_{n\geq 0}$, is defined as
\begin{equation}\label{eq2.1}
(u\times v)_n=\sum_{k=0}^n \binom{n}{k}u_k v_{n-k}.
\end{equation}
From a computational point of view, the interest of this definition relies on the following characterization in terms of generating functions (c.f. \cite[Proposition 2.1]{AdeLek2017.2})
\begin{equation}\label{eq2.2}
G(\boldsymbol{u}\times \boldsymbol{v},z)=G(\boldsymbol{u},z)G(\boldsymbol{v},z).
\end{equation}
In addition (cf. \cite[Corollary 2.2]{AdeLek2017.2}), $(\mathcal{G},\times)$ is an abelian group with identity element $\boldsymbol{e}=(e_n)_{n\geq 0}$, where $e_0=1$ and $e_n=0$, $n\in \mathds{N}$.

On the other hand, let $A(x)=(A_n(x))_{n\geq 0}$ be a sequence of polynomials such that $A(0)\in \mathcal{G}$. Recall that $A(x)$ is called an Appell sequence if one of the following equivalent conditions is satisfied
\begin{equation}\label{eq2.3}
A'_n(x)=nA_{n-1}(x),\quad n\in \mathds{N},
\end{equation}
\begin{equation}\label{eq2.4}
A_n(x)=\sum_{k=0}^n \binom{n}{k}A_k(0)x^{n-k},
\end{equation}
or
\begin{equation}\label{eq2.5}
G(A(x),z)=G(A(0),z)e^{xz}.
\end{equation}
Denote by $\mathcal{A}$ the set of all Appell sequences. The binomial convolution of $A(x),C(x)\in \mathcal{A}$, denoted by $(A\times C)(x)= ((A\times C)_n(x))_{n\geq 0}$, is defined as (cf. \cite[Section 3]{AdeLek2017.2})
\begin{equation*}
(A\times C)(x)=A(0)\times C(x)=A(x)\times C(0)=A(0)\times C(0)\times I(x).
\end{equation*}
Equivalently, by
\begin{equation*}
\begin{split}
&(A\times C)_n(x)=\sum_{k=0}^n \binom{n}{k}A_k(0)C_{n-k}(x)=\sum_{k=0}^n \binom{n}{k}C_k(0)A_{n-k}(x)\\
&=\sum_{j_1+j_2+j_3=n}\binom{n}{j_1,j_2,j_3} A_{j_1}(0)C_{j_2}(0)x^{j_3},
\end{split}
\end{equation*}
where
\begin{equation*}
\binom{n}{j_1,\ldots ,j_m}=\dfrac{n!}{j_1!\cdots j_m!},\quad n,j_1,\ldots ,j_m\in \mathds{N}_0,\quad j_1+\cdots +j_m=n,\quad m\in \mathds{N},
\end{equation*}
is the multinomial coefficient. As shown in \cite[Theorem 3.1]{AdeLek2017.2} $(\mathcal{A},\times)$ is an abelian group with identity element $I(x)$. Also, $(A\times C)(x)$ is characterized by its generating function
\begin{equation}\label{eq2.6}
G((A\times C)(x),z)=G(A(0),z)G(C(0),z)e^{xz}.
\end{equation}

The following easy lemma will be used in Sections~\ref{s6} and \ref{s7}.

\begin{lemma}\label{l2.1}
Let $m\in \mathds{N}$ and let $A^{(k)}(x)$, $k=1,\ldots ,m$ be Appell sequences. Let $x_k\in \mathds{R}$, $k=1,\ldots ,m$, with $x_1+\cdots +x_m=x$. Then,
\begin{equation}\label{eq2.7}
A^{(1)}(x_1)\times \cdots \times A^{(m)}(x_m)=(A^{(1)}\times \cdots \times A^{(m)})(x).
\end{equation}
\end{lemma}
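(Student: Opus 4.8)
The plan is to prove \eqref{eq2.7} by comparing the generating functions of the two sides, since by \eqref{eq2.2} an element of $\mathcal{G}$ is uniquely determined by its generating function on a neighbourhood of the origin.

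First I would record the multivariate analogue of \eqref{eq2.6}. Because $(\mathcal{A},\times)$ is an abelian group, the iterated binomial convolution $(A^{(1)}\times\cdots\times A^{(m)})(x)$ is well defined, and an immediate induction on $m$ starting from the two-factor case \eqref{eq2.6} gives
\[
G\bigl((A^{(1)}\times\cdots\times A^{(m)})(x),z\bigr)=e^{xz}\prod_{k=1}^m G\bigl(A^{(k)}(0),z\bigr).
\]
For the left-hand side of \eqref{eq2.7}, each $A^{(k)}(x_k)=(A^{(k)}_n(x_k))_{n\geq 0}$ is a sequence of real numbers belonging to $\mathcal{G}$, so the iterated convolution there is taken in the abelian group $(\mathcal{G},\times)$. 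Applying \eqref{eq2.2} repeatedly and then \eqref{eq2.5} to each factor,
\[
G\bigl(A^{(1)}(x_1)\times\cdots\times A^{(m)}(x_m),z\bigr)=\prod_{k=1}^m G\bigl(A^{(k)}(x_k),z\bigr)=\prod_{k=1}^m G\bigl(A^{(k)}(0),z\bigr)e^{x_kz}.
\]
Since $x_1+\cdots+x_m=x$, the product of the exponential factors equals $e^{xz}$, so the two generating functions coincide and \eqref{eq2.7} follows by the uniqueness afforded by \eqref{eq2.2}.

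The only point that needs a word of care — and it is bookkeeping rather than a genuine obstacle — is that the symbol $\times$ on the left of \eqref{eq2.7} denotes convolution in $\mathcal{G}$, whereas on the right it is (the evaluation at $x$ of) the convolution in $\mathcal{A}$; the generating function identities \eqref{eq2.2} and \eqref{eq2.6} are exactly what bridges the two, after which the computation is routine.
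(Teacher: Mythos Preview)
Your argument is correct and is essentially the same as the paper's own proof: both compute the common generating function $G(A^{(1)}(0),z)\cdots G(A^{(m)}(0),z)\,e^{xz}$ using \eqref{eq2.2} and \eqref{eq2.5} for the left-hand side and (the $m$-fold version of) \eqref{eq2.6} for the right-hand side, and then conclude by uniqueness of generating functions. Your added remark distinguishing the two uses of $\times$ is a helpful clarification but does not change the route.
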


\begin{proof}
By \eqref{eq2.2} and \eqref{eq2.6}, both sides in \eqref{eq2.7} have the same generating function given by
\begin{equation*}
G(A^{(1)}(0),z)\cdots G(A^{(m)}(0),z)e^{xz}.
\end{equation*}
Therefore, the conclusion follows from \eqref{eq2.2} and \eqref{eq2.5}.\qed
\end{proof}

\section{The forward difference transformation}\label{s3}

Let $(U_j)_{j\geq 1}$ be a sequence of independent identically distributed random variables having the uniform distribution on $[0,1]$ and denote by
\begin{equation}\label{eq3.8}
S_k=U_1+\cdots +U_k,\qquad k\in \mathds{N}\quad (S_0=0).
\end{equation}
If $f$ is $k$ times differentiable, then the $k$th forward difference of $f$ can be written as (cf. \cite[Lemma 7.2]{AdeLek2017.2})
\begin{equation}\label{eq3.9}
\Delta^k f(x)=\mathds{E}f^{(k)}(x+S_k),\qquad k\in \mathds{N}_0,
\end{equation}
where $\mathds{E}$ stands for mathematical expectation. The following definition will play an important role.

\begin{definition}\label{d3.1}
Let $A(x)\in \mathcal{A}$ and $\boldsymbol{u}\in \mathcal{G}$. We define the forward difference transformation $L_{\boldsymbol{u}} A(x)=(L_{\boldsymbol{u}} A_n(x))_{n\geq 0}$ as
\begin{equation}\label{eq3.10}
L_{\boldsymbol{u}}A_n(x)=\sum_{k=0}^n \dfrac{u_k}{k!}\Delta^k A_n(x)=\sum_{k=0}^n \binom{n}{k}u_k \mathds{E}A_{n-k}(x+S_k).
\end{equation}
\end{definition}

Observe that the second equality in \eqref{eq3.10} follows from \eqref{eq2.3} and \eqref{eq3.9}, since $A_n^{(k)}(x)=n(n-1)\cdots (n-k+1)A_{n-k}(x)$, $k=0,1,\ldots ,n$. This definition can be restated in terms of generating functions as follows.

\begin{theorem}\label{th3.2}
Let $A(x)\in \mathcal{A}$ and $\boldsymbol{u}\in \mathcal{G}$. Then, $L_{\boldsymbol{u}}A(x)$ is an Appell sequence characterized by its generating function
\begin{equation}\label{eq3.11}
G(L_{\boldsymbol{u}}A(x),z)=G(A(0),z)G(\boldsymbol{u},e^{z}-1)e^{xz}.
\end{equation}
\end{theorem}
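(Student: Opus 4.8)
The plan is to compute the generating function of $L_{\boldsymbol{u}}A(x)$ directly from the definition \eqref{eq3.10} and then read off from \eqref{eq3.11} that the result is itself an Appell sequence. Starting from the second expression in \eqref{eq3.10}, namely $L_{\boldsymbol{u}}A_n(x)=\sum_{k=0}^n\binom{n}{k}u_k\,\mathds{E}A_{n-k}(x+S_k)$, I would recognize the inner factor as a binomial convolution: fixing $k$, the sequence $(\mathds{E}A_{n-k}(x+S_k))$ is handled through \eqref{eq2.5}, since $G(A(x+S_k),z)=G(A(0),z)e^{(x+S_k)z}$ and hence, taking expectations, $G(\mathds{E}A(x+S_k),z)=G(A(0),z)e^{xz}\mathds{E}e^{S_kz}$. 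Because $S_k=U_1+\cdots+U_k$ is a sum of $k$ independent uniform$[0,1]$ variables, $\mathds{E}e^{S_kz}=\bigl(\mathds{E}e^{U_1z}\bigr)^k=\bigl((e^z-1)/z\bigr)^k$ for $z\neq 0$ (and the value $1$ at $z=0$).

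Next I would sum over $k$. Multiplying $L_{\boldsymbol{u}}A_n(x)$ by $z^n/n!$ and summing over $n$, the binomial-convolution structure in the index $n$ (the $\binom{n}{k}$ splitting $n$ into $k$ and $n-k$) turns the double sum into a product of generating functions, exactly as in \eqref{eq2.1}--\eqref{eq2.2}: one factor collects the $k$-dependent weights $u_k\,\mathds{E}e^{S_kz}$ and the other is $G(A(0),z)e^{xz}$. Concretely,
\begin{equation*}
G(L_{\boldsymbol{u}}A(x),z)=\Bigl(\sum_{k=0}^\infty \frac{u_k}{k!}\bigl(z\,\mathds{E}e^{S_1z}\bigr)^k\Bigr)G(A(0),z)e^{xz}
=\Bigl(\sum_{k=0}^\infty \frac{u_k}{k!}(e^z-1)^k\Bigr)G(A(0),z)e^{xz},
\end{equation*}
and the first bracket is precisely $G(\boldsymbol{u},e^z-1)$ by definition of the generating function of $\boldsymbol{u}$. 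This gives \eqref{eq3.11}. I would also need a brief remark that everything converges: for $|z|\leq r$ small enough, $|e^z-1|$ is small, and since $\boldsymbol{u}\in\mathcal{G}$ has a positive radius of convergence, $G(\boldsymbol{u},e^z-1)$ is well defined and analytic near $0$; combined with $A(0)\in\mathcal{G}$ this justifies the rearrangements of the absolutely convergent series.

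Finally, to conclude that $L_{\boldsymbol{u}}A(x)$ is an Appell sequence, I would invoke the characterization \eqref{eq2.5}: write $G(\boldsymbol{u},e^z-1)G(A(0),z)=G(\boldsymbol{w},z)$ for the sequence $\boldsymbol{w}$ with this generating function, which lies in $\mathcal{G}$ because its value at $z=0$ is $u_0 A_0(0)\neq 0$ and it is analytic near $0$; then \eqref{eq3.11} reads $G(L_{\boldsymbol{u}}A(x),z)=G(\boldsymbol{w},z)e^{xz}$, which is exactly the defining property \eqref{eq2.5} of an Appell sequence with $L_{\boldsymbol{u}}A(0)=\boldsymbol{w}$. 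The only mildly delicate point is the bookkeeping at $z=0$ in the identity $\mathds{E}e^{S_kz}=((e^z-1)/z)^k$ — the apparent singularity is removable and the factor of $z^k$ it contributes cancels against the $z^{-k}$ hidden in matching powers — but this is routine once one writes $z\,\mathds{E}e^{S_1z}=e^z-1$, which has no singularity at all; so there is no real obstacle, just care in the series manipulation.
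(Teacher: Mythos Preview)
Your argument is correct and structurally identical to the paper's: interchange the order of summation, factor out $G(A(0),z)e^{xz}$, and identify the remaining $k$-sum as $G(\boldsymbol{u},e^z-1)$, then read off the Appell property from \eqref{eq2.5}. The one variation is that the paper starts from the first form of \eqref{eq3.10} and uses the purely algebraic identity $\Delta^k G(A(x),z)=G(A(0),z)e^{xz}(e^z-1)^k$, whereas you start from the second (probabilistic) form and compute $\mathds{E}e^{S_kz}=\bigl((e^z-1)/z\bigr)^k$; by \eqref{eq3.9} applied to $e^{xz}$ these are two expressions for the same quantity, so the difference is cosmetic---though the paper's route sidesteps the removable-singularity bookkeeping at $z=0$ that you correctly flag, and it is a bit more explicit about the radius of convergence (choosing $\rho=\min(r,t)$ with $te^t=s$ so that $|e^z-1|\le s$).
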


\begin{proof}
Note that $L_{\boldsymbol{u}}A_0(0)=u_0A_0(0)\neq 0$. Suppose that the radii of $A(0)$ and $\boldsymbol{u}$ are $r>0$ and $s>0$, respectively. Denote by $\rho=\min (r,t)>0$, where $t$ is the unique positive solution to the equation $te^t=s$. Observe that
\begin{equation*}
|G(\boldsymbol{u},e^z-1)|\leq G(|\boldsymbol{u}|,|z|e^{|z|})\leq G(|\boldsymbol{u}|,s)<\infty,\quad |z|\leq t.
\end{equation*}
This means that formula \eqref{eq3.11} makes sense for $|z|\leq \rho$. For $|z|\leq \rho$, we have
\begin{equation}\label{eq3.12}
\begin{split}
&G(L_{\boldsymbol{u}}A(x),z)=\sum_{n=0}^\infty \dfrac{z^n}{n!} \sum_{k=0}^n \dfrac{u_k}{k!}\Delta^k A_n(x)=\sum_{k=0}^\infty \dfrac{u_k}{k!}\sum_{n=k}^\infty \Delta^k A_n(x)\dfrac{z^n}{n!}\\
&=\sum_{k=0}^\infty \dfrac{u_k}{k!}\Delta^k \left ( \sum_{n=0}^\infty A_n(x)\dfrac{z^n}{n!}\right )=\sum_{k=0}^\infty \dfrac{u_k}{k!}\Delta^k G(A(x),z),
\end{split}
\end{equation}
because $\Delta^kA_n(x)=0$, $0\leq n<k$. On the other hand, we have from \eqref{eq2.5}
\begin{equation*}
\Delta^k G(A(x),z)=G(A(0),z)e^{xz}(e^z-1)^k,\quad k\in \mathds{N}_0.
\end{equation*}
This, together with \eqref{eq3.12}, shows \eqref{eq3.11}. In turn, formula \eqref{eq3.11} implies that
\begin{equation*}
G(L_{\boldsymbol{u}}A(x),z)=G(L_{\boldsymbol{u}}A(0),z)e^{xz},
\end{equation*}
which, by virtue of \eqref{eq2.5}, shows the result.\qed
\end{proof}

Theorem~\ref{th3.2}  allows us to prove many identities for Appell sequences in a very easy way, as shown in the following result.

\begin{corollary}\label{cor3.3}
Let $A(x),C(x)\in \mathcal{A}$ and let $\boldsymbol{u},\boldsymbol{v}\in \mathcal{G}$. Then,
\begin{equation*}
\begin{split}
L_{\boldsymbol{u}}L_{\boldsymbol{v}}(A\times C)(x)&=L_{\boldsymbol{v}}L_{\boldsymbol{u}}(A\times C)(x)= L_{\boldsymbol{u}\times \boldsymbol{v}}(A\times C)(x)\\
&=(L_{\boldsymbol{u}}A\times L_{\boldsymbol{v}}C)(x)=(L_{\boldsymbol{v}}A\times L_{\boldsymbol{u}}C)(x).
\end{split}
\end{equation*}
\end{corollary}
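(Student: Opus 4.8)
The plan is to reduce everything to the generating-function characterization from Theorem~\ref{th3.2} together with the multiplicativity of binomial convolution from equation~\eqref{eq2.2}. The key observation is that each side of every claimed identity is an Appell sequence (by Theorem~\ref{th3.2}, applied repeatedly, using that $(A\times C)(x)\in\mathcal{A}$), so by~\eqref{eq2.5} two such sequences coincide as soon as their generating functions agree on a common disk $|z|\le\rho$. Thus the entire corollary collapses to a single computation: compute the generating function of $L_{\boldsymbol{u}}L_{\boldsymbol{v}}(A\times C)(x)$ and show it is symmetric in $\boldsymbol{u},\boldsymbol{v}$ and factors correctly.

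First I would compute $G(L_{\boldsymbol{u}}L_{\boldsymbol{v}}(A\times C)(x),z)$. Applying Theorem~\ref{th3.2} to the Appell sequence $L_{\boldsymbol{v}}(A\times C)(x)$, and then again to $(A\times C)(x)$ itself, and using~\eqref{eq2.6} for the generating function of $(A\times C)(x)$, I get
\begin{equation*}
G(L_{\boldsymbol{u}}L_{\boldsymbol{v}}(A\times C)(x),z)=G(A(0),z)\,G(C(0),z)\,G(\boldsymbol{u},e^z-1)\,G(\boldsymbol{v},e^z-1)\,e^{xz}.
\end{equation*}
This expression is manifestly symmetric in $\boldsymbol{u}$ and $\boldsymbol{v}$, which gives the first equality $L_{\boldsymbol{u}}L_{\boldsymbol{v}}(A\times C)(x)=L_{\boldsymbol{v}}L_{\boldsymbol{u}}(A\times C)(x)$. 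Next, by~\eqref{eq2.2} we have $G(\boldsymbol{u},e^z-1)G(\boldsymbol{v},e^z-1)=G(\boldsymbol{u}\times\boldsymbol{v},e^z-1)$, so the same expression equals $G(A(0),z)G(C(0),z)G(\boldsymbol{u}\times\boldsymbol{v},e^z-1)e^{xz}$, which by Theorem~\ref{th3.2} (with~\eqref{eq2.6}) is precisely $G(L_{\boldsymbol{u}\times\boldsymbol{v}}(A\times C)(x),z)$; this yields the third expression.

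For the last two expressions, I would compute the generating function of the convolution $(L_{\boldsymbol{u}}A\times L_{\boldsymbol{v}}C)(x)$ directly from~\eqref{eq2.6}: it is $G(L_{\boldsymbol{u}}A(0),z)\,G(L_{\boldsymbol{v}}C(0),z)\,e^{xz}$, and by Theorem~\ref{th3.2} (evaluated at $x=0$) this is $G(A(0),z)G(\boldsymbol{u},e^z-1)\cdot G(C(0),z)G(\boldsymbol{v},e^z-1)\cdot e^{xz}$, the same product as before. Swapping $\boldsymbol{u}$ and $\boldsymbol{v}$ gives $(L_{\boldsymbol{v}}A\times L_{\boldsymbol{u}}C)(x)$ with the identical generating function. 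Hence all five sequences share one generating function on the intersection of the relevant disks of convergence, and~\eqref{eq2.5} forces them to be equal.

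I do not anticipate a serious obstacle here; the only minor bookkeeping point is convergence, namely checking that all the generating functions involved are simultaneously valid on a common positive-radius disk. This is handled exactly as in the proof of Theorem~\ref{th3.2}: if $A(0),C(0)$ have radii $r_1,r_2$ and $\boldsymbol{u},\boldsymbol{v}$ have radii $s_1,s_2$, one takes $\rho=\min(r_1,r_2,t_1,t_2)$ where $t_ie^{t_i}=s_i$, and the estimate $|G(\boldsymbol{u},e^z-1)|\le G(|\boldsymbol{u}|,|z|e^{|z|})<\infty$ for $|z|\le t_1$ (and similarly for $\boldsymbol{v}$) shows every displayed product converges there. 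With that remark in place the proof is just the chain of generating-function identities above.
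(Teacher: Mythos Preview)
Your proof is correct and follows exactly the paper's approach: compute that every expression has the common generating function $G(A(0),z)G(C(0),z)G(\boldsymbol{u},e^z-1)G(\boldsymbol{v},e^z-1)e^{xz}$ via Theorem~\ref{th3.2} and~\eqref{eq2.6}, then conclude equality from~\eqref{eq2.5}. The paper states this in two lines without spelling out the intermediate steps or the convergence bookkeeping, but your argument is the same one in more detail.
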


\begin{proof}
Using \eqref{eq2.6} and \eqref{eq3.11}, it can be checked that the common generating function of all Appell sequences under consideration is
\begin{equation*}
G(A(0),z)G(C(0),z) G(\boldsymbol{u},e^z-1)G(\boldsymbol{v},e^z-1)e^{xz}.
\end{equation*}
Therefore, the result follows from \eqref{eq2.5}.\qed
\end{proof}

\begin{corollary}\label{cor3.4}
Let $\boldsymbol{u}\in \mathcal{G}$. Then, the map $L_{\boldsymbol{u}}:\mathcal{A}\to \mathcal{A}$ is biyective and $L_{\boldsymbol{u}}^{-1}=L_{\boldsymbol{v}}$, where $\boldsymbol{v}$ is the inverse of $\boldsymbol{u}$ in $(\mathcal{G},\times)$.

Moreover, $L_{\boldsymbol{u}}$ is a multiplier, i.e.,
\begin{equation}\label{eq3.13}
L_{\boldsymbol{u}}(A\times C)(x)=(A\times L_{\boldsymbol{u}} C)(x)=(L_{\boldsymbol{u}}A\times C)(x),\quad A(x),C(x)\in \mathcal{A}.
\end{equation}
\end{corollary}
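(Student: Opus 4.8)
The plan is to derive both assertions from the generating-function identity \eqref{eq3.11} of Theorem~\ref{th3.2}, using the group structure of $(\mathcal{G},\times)$ and the fact that, by \eqref{eq2.5}, an Appell sequence is uniquely determined by its generating function. A preliminary observation I would record first is that $L_{\boldsymbol{e}}$ is the identity map on $\mathcal{A}$: since $G(\boldsymbol{e},w)=1$, formula \eqref{eq3.11} gives $G(L_{\boldsymbol{e}}A(x),z)=G(A(0),z)e^{xz}=G(A(x),z)$, whence $L_{\boldsymbol{e}}A(x)=A(x)$.

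For the bijectivity, let $\boldsymbol{v}\in\mathcal{G}$ be the inverse of $\boldsymbol{u}$, so that $\boldsymbol{u}\times\boldsymbol{v}=\boldsymbol{e}$ and, by \eqref{eq2.2}, $G(\boldsymbol{u},w)G(\boldsymbol{v},w)=1$ for $w$ near $0$; in particular $G(\boldsymbol{u},e^z-1)G(\boldsymbol{v},e^z-1)=1$ for $z$ in a neighbourhood of the origin. Applying \eqref{eq3.11} twice, and using its $x=0$ case $G(L_{\boldsymbol{u}}A(0),z)=G(A(0),z)G(\boldsymbol{u},e^z-1)$, I get
\begin{equation*}
G(L_{\boldsymbol{v}}L_{\boldsymbol{u}}A(x),z)=G(A(0),z)G(\boldsymbol{u},e^z-1)G(\boldsymbol{v},e^z-1)e^{xz}=G(A(x),z),
\end{equation*}
so $L_{\boldsymbol{v}}L_{\boldsymbol{u}}A(x)=A(x)$ by \eqref{eq2.5}, and the symmetric computation yields $L_{\boldsymbol{u}}L_{\boldsymbol{v}}A(x)=A(x)$. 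Hence $L_{\boldsymbol{u}}:\mathcal{A}\to\mathcal{A}$ is a bijection with $L_{\boldsymbol{u}}^{-1}=L_{\boldsymbol{v}}$. (Alternatively, one may invoke Corollary~\ref{cor3.3} with the second Appell sequence taken to be $I(x)$ and read off $L_{\boldsymbol{v}}L_{\boldsymbol{u}}A=L_{\boldsymbol{v}\times\boldsymbol{u}}A=L_{\boldsymbol{e}}A=A$.)

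For the multiplier property I would again compare generating functions. Since $(A\times C)(0)=A(0)\times C(0)$, formulas \eqref{eq2.2} and \eqref{eq3.11} give
\begin{equation*}
G(L_{\boldsymbol{u}}(A\times C)(x),z)=G(A(0),z)G(C(0),z)G(\boldsymbol{u},e^z-1)e^{xz},
\end{equation*}
and by \eqref{eq2.6}, together with $G(L_{\boldsymbol{u}}A(0),z)=G(A(0),z)G(\boldsymbol{u},e^z-1)$, the same expression equals both $G((L_{\boldsymbol{u}}A\times C)(x),z)$ and $G((A\times L_{\boldsymbol{u}}C)(x),z)$; thus \eqref{eq3.13} follows from \eqref{eq2.5}. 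Equivalently, this is the case $\boldsymbol{v}=\boldsymbol{e}$ of Corollary~\ref{cor3.3}. I do not expect a genuine obstacle: the argument is essentially bookkeeping on generating functions. The only points deserving a word of care are that all the relevant series converge on a common disc (this is exactly the radius $\rho$ produced in the proof of Theorem~\ref{th3.2}, applied once to $\boldsymbol{u}$ and once to $\boldsymbol{v}$) and the appeal to the injectivity of the map $A(x)\mapsto G(A(x),z)$ provided by \eqref{eq2.5}.
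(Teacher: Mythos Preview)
Your proof is correct and follows essentially the same route as the paper: both arguments rest on the generating-function identity \eqref{eq3.11} together with the group structure of $(\mathcal{G},\times)$, and you even flag the alternative of invoking Corollary~\ref{cor3.3} with $C=I$ (for bijectivity) and $\boldsymbol{v}=\boldsymbol{e}$ (for the multiplier property), which is precisely how the paper organises the argument. The only cosmetic difference is that the paper first checks injectivity directly from \eqref{eq3.11} before exhibiting the inverse, whereas you establish bijectivity in one stroke by showing $L_{\boldsymbol{v}}L_{\boldsymbol{u}}=L_{\boldsymbol{u}}L_{\boldsymbol{v}}=\mathrm{id}$.
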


\begin{proof}
Suppose that $L_{\boldsymbol{u}}A(x)=L_{\boldsymbol{u}} C(x)$. From \eqref{eq3.11}, we see that $G(A(0),z)=G(C(0),z)$, which implies that $A(0)=C(0)$ and, a fortiori, $A(x)=C(x)$. On the other hand, let $A(x)\in \mathcal{A}$ and let $\boldsymbol{v}$ be the inverse of $\boldsymbol{u}$ in $(\mathcal{G},\times)$. Choosing $C(x)=I(x)$ in Corollary~\ref{cor3.3}, we see that $L_{\boldsymbol{u}}(L_{\boldsymbol{v}}A)(x)=L_{\boldsymbol{v}}(L_{\boldsymbol{u}}A)(x)=L_{\boldsymbol{e}}A(x)=A(x)$, thus showing that $L_{\boldsymbol{u}}$ is biyective.

Finally, statement \eqref{eq3.13} follows from Corollary~\ref{cor3.3} by choosing $\boldsymbol{v}=\boldsymbol{e}$. The proof is complete.\qed
\end{proof}

Fix $\boldsymbol{u}\in \mathcal{G}$. Setting $C=I$ in Corollary~\ref{cor3.4}, we see that
\begin{equation*}
    L_{\boldsymbol{u}}A(x)=(A\times L_{\boldsymbol{u}}I)(x),\quad A\in \mathcal{A}.
\end{equation*}
In other words, the map $L_{\boldsymbol{u}}$ is determined by the (multiplier) element $L_{\boldsymbol{u}}I(x)$.

\section{Stirling numbers}\label{s4}

Recall that the Stirling numbers of the first and second kind, respectively denoted by $s(n,k)$ and $S(n,k)$, $k=0,1,\ldots ,n$, are defined by (see, for instance, Abramowitz and Stegun \cite[p. 824]{AbrSte1992} or Roman \cite[p. 60]{Rom1984})
\begin{equation}\label{eq4.13}
(x)_n=\sum_{k=0}^n s(n,k) x^k,\qquad x^n=\sum_{k=0}^n S(n,k)(x)_k,
\end{equation}
where $(x)_n=x(x-1)\cdots (x-n+1)$. Equivalently, the Stirling numbers are defined by their generating functions
\begin{equation}\label{eq4.14}
\dfrac{\log^k (z+1)}{k!}=\sum_{n=k}^\infty s(n,k) \dfrac{z^n}{n!},\quad \dfrac{(e^z-1)^k}{k!}=\sum_{n=k}^\infty S(n,k) \dfrac{z^n}{n!},\quad |z|<1.
\end{equation}

Theorem~\ref{th3.2} suggests that the Stirling numbers play a fundamental role in order to obtain explicit expressions for Appell sequences (see also Theorem~\ref{th5.7} below and the examples in Sections~\ref{s6} and \ref{s7}). We start with a probabilistic representation of such numbers which will be useful to deal with specific examples.

Let $(T_j)_{j\geq 1}$ be a sequence of independent identically distributed random variables having the exponential density
\begin{equation}\label{eq4.15}
\rho(\theta)=e^{-\theta},\quad \theta \geq 0.
\end{equation}
We assume that $(T_j)_{j\geq 1}$ and $(U_j)_{j\geq 1}$, as given in \eqref{eq3.8}, are mutually independent. Denote by
\begin{equation}\label{eq4.16}
S_k^\star=U_1T_1+\cdots +U_kT_k,\quad k\in \mathds{N}\qquad (S_0^\star=0).
\end{equation}
Observe that
\begin{equation}\label{eq4.19.2}
\mathds{E}U_1^n=\dfrac{1}{n+1},\qquad \mathds{E}T_1^n=n!.
\end{equation}

The following auxiliary result is known.

\begin{lemma}\label{l4.5}
For any $n\in \mathds{N}_0$ and $k=0,1,\ldots ,n$, we have
\begin{equation*}
s(n,k)=(-1)^{n-k} \binom{n}{k} \mathds{E} (S_k^\star)^{n-k},\qquad S(n,k)=\binom{n}{k}\mathds{E}S_k^{n-k}.
\end{equation*}
\end{lemma}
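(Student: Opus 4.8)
The plan is to establish both formulas by comparing generating functions, exploiting the probabilistic representations \eqref{eq4.19.2} together with the defining expansions \eqref{eq4.14}. Consider first the claim for $S(n,k)$. Recall from \eqref{eq3.8} that $S_k=U_1+\cdots+U_k$ is a sum of $k$ independent uniform variables. Since $\mathds{E}e^{zU_1}=\sum_{m\geq 0}\mathds{E}U_1^m z^m/m!=\sum_{m\geq 0}z^m/(m+1)!=(e^z-1)/z$, independence gives $\mathds{E}e^{zS_k}=((e^z-1)/z)^k$. Hence $\sum_{m\geq 0}\mathds{E}S_k^m z^m/m!=(e^z-1)^k/z^k$, and multiplying by $z^k/k!$ yields $\sum_{m\geq 0}\binom{m+k}{k}\mathds{E}S_k^m z^{m+k}/(m+k)!=(e^z-1)^k/k!$. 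Reindexing with $n=m+k$ and comparing with the right-hand identity in \eqref{eq4.14} shows $S(n,k)=\binom{n}{k}\mathds{E}S_k^{n-k}$ for all $n\geq k$, which is the desired expression; for $n<k$ both sides vanish.

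For the $s(n,k)$ formula I would run the analogous computation using $S_k^\star=U_1T_1+\cdots+U_kT_k$ from \eqref{eq4.16}. First compute the moment generating function of a single term $U_1T_1$: by \eqref{eq4.19.2}, $\mathds{E}(U_1T_1)^m=\mathds{E}U_1^m\,\mathds{E}T_1^m=m!/(m+1)$, so $\mathds{E}e^{zU_1T_1}=\sum_{m\geq 0}\mathds{E}(U_1T_1)^m z^m/m!=\sum_{m\geq 0}z^m/(m+1)=\log(1/(1-z))/z$ for $|z|<1$. By independence of the $U_jT_j$, $\mathds{E}e^{zS_k^\star}=\bigl(-\log(1-z)/z\bigr)^k$, so $\sum_{m\geq 0}\mathds{E}(S_k^\star)^m z^m/m!=(-\log(1-z))^k/z^k$. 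I would then substitute $z\mapsto -z$ to bring this to the form involving $\log(1+z)$: this gives $\sum_{m\geq 0}(-1)^m\mathds{E}(S_k^\star)^m z^m/m!=(-1)^k\log^k(1+z)/z^k$. Multiplying by $(-1)^k z^k/k!$ and reindexing $n=m+k$ produces $\sum_{n\geq k}(-1)^{n-k}\binom{n}{k}\mathds{E}(S_k^\star)^{n-k}\,z^n/n!=\log^k(1+z)/k!$, which matches the left-hand identity in \eqref{eq4.14} and yields $s(n,k)=(-1)^{n-k}\binom{n}{k}\mathds{E}(S_k^\star)^{n-k}$.

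Since the lemma is stated as ``known,'' an alternative and shorter route is simply to verify that the claimed right-hand sides satisfy the defining recurrences for the Stirling numbers, or to cite the source; but the generating-function argument above is self-contained given what the excerpt has already set up. The only genuinely delicate point is bookkeeping with convergence and reindexing: one should check that $|z|<1$ suffices for all the series manipulations (the mgf of $U_1T_1$ converges precisely there, and $\log(1+z)$ and $\log(1-z)$ have radius $1$), and that interchanging summation order in the double series is justified by absolute convergence on that disc. I expect this convergence/reindexing step to be the main (though routine) obstacle; the substitution $z\mapsto -z$ to reconcile the $\log(1-z)$ coming from the exponential moments with the $\log(1+z)$ in \eqref{eq4.14} is the one place where a sign error is easy to make, so I would carry it out carefully.
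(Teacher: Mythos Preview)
Your argument is correct and self-contained; the paper itself does not prove Lemma~\ref{l4.5} at all but merely cites external references (\cite{AdeLek2017.3} for $s(n,k)$ and \cite{Sun2005} for $S(n,k)$). Your generating-function route, computing $\mathds{E}e^{zS_k}=\bigl((e^z-1)/z\bigr)^k$ and $\mathds{E}e^{zS_k^\star}=\bigl(-\log(1-z)/z\bigr)^k$ and matching against \eqref{eq4.14}, is exactly the natural approach and supplies what the paper omits.

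One bookkeeping remark, precisely at the spot you flagged as dangerous: after the substitution $z\mapsto -z$ you write
\[
\sum_{m\geq 0}(-1)^m\mathds{E}(S_k^\star)^m \dfrac{z^m}{m!}=(-1)^k\dfrac{\log^k(1+z)}{z^k},
\]
but the factor $(-1)^k$ on the right should not be there: substituting into $(-\log(1-z))^k/z^k$ gives $(-\log(1+z))^k/(-z)^k=\log^k(1+z)/z^k$, since the two $(-1)^k$ factors cancel. You then compensate by multiplying through by $(-1)^k z^k/k!$ rather than $z^k/k!$, so the final identity comes out right; but in a clean write-up you should simply multiply by $z^k/k!$ and drop both spurious signs.
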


The probabilistic representations for $s(n,k)$ and $S(n,k)$ given in Lemma~\ref{l4.5} are shown in \cite{AdeLek2017.3} and \cite{Sun2005}, respectively. On the other hand, choosing $f(x)=I_n(x)$ in \eqref{eq3.9}, we have from Lemma~\ref{l4.5}
\begin{equation}\label{eq4.19}
S(n,k)=\dfrac{\Delta^k I_n(0)}{k!},\quad k=0,1,\ldots ,n.
\end{equation}

Let $\boldsymbol{u}$ and $\boldsymbol{v}$ be two real sequences defined by the equivalent equations (cf. Comtet \cite[p. 144]{Com1974})
\begin{equation}\label{eq4.20}
u_n=\sum_{k=0}^n s(n,k)v_k \quad\Leftrightarrow \quad v_n=\sum_{k=0}^n S(n,k) u_k.
\end{equation}

The following simple auxiliary result will be very useful.

\begin{lemma}\label{l4.6}
Let $\boldsymbol{u}$ and $\boldsymbol{v}$ be as in \eqref{eq4.20}. Then, $\boldsymbol{u}\in \mathcal{G}$ if and only if $\boldsymbol{v}\in \mathcal{G}$. In such a case, we have
\begin{equation}\label{eq4.21}
G(\boldsymbol{v},z)=G(\boldsymbol{u},e^z-1),\quad G(\boldsymbol{u},z)=G(\boldsymbol{v},\log (z+1)).
\end{equation}
\end{lemma}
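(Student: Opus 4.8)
The plan is to verify the lemma entirely through generating functions, exploiting the fact that both families of Stirling numbers occurring here have a fixed sign: $S(n,k)\ge 0$, and $(-1)^{n-k}s(n,k)=|s(n,k)|\ge 0$ (also visible from the representations in Lemma~\ref{l4.5}). This is what lets one swap the order of a double summation with nothing more than Tonelli's theorem. A preliminary observation: since $s(0,0)=S(0,0)=1$, the relations \eqref{eq4.20} force $u_0=v_0$, so $u_0\neq 0$ if and only if $v_0\neq 0$, and only the convergence (radius) conditions require real attention.

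First I would show that $\boldsymbol{u}\in\mathcal{G}$ implies $\boldsymbol{v}\in\mathcal{G}$ together with the first identity in \eqref{eq4.21}. Let $r>0$ be a radius for $\boldsymbol{u}$. For $0\le t<\log(1+r)$, using $S(n,k)\ge 0$ and the second generating function in \eqref{eq4.14},
\begin{equation*}
\sum_{n=0}^\infty \dfrac{t^n}{n!}\sum_{k=0}^n S(n,k)\,|u_k|=\sum_{k=0}^\infty |u_k|\sum_{n\ge k}S(n,k)\dfrac{t^n}{n!}=\sum_{k=0}^\infty |u_k|\dfrac{(e^t-1)^k}{k!}=G(|\boldsymbol{u}|,e^t-1)<\infty,
\end{equation*}
because $e^t-1<r$. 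Hence the double series $\sum_n (z^n/n!)\sum_k S(n,k)u_k$ converges absolutely for $|z|<\log(1+r)$; this already gives $\boldsymbol{v}\in\mathcal{G}$, and a Fubini interchange identical to the display above yields $G(\boldsymbol{v},z)=\sum_k u_k (e^z-1)^k/k!=G(\boldsymbol{u},e^z-1)$ on that disc.

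The converse direction is symmetric, with $|s(n,k)|$ in place of $S(n,k)$. Replacing $z$ by $-z$ in the first generating function of \eqref{eq4.14} and using $(-1)^n s(n,k)=(-1)^k|s(n,k)|$ gives $\sum_{n\ge k}|s(n,k)|\,t^n/n!=(-\log(1-t))^k/k!$ for $0\le t<1$. So if $\rho>0$ is a radius for $\boldsymbol{v}$, then for $0\le t<1-e^{-\rho}$ the quantity $\sum_n (t^n/n!)\sum_k |s(n,k)|\,|v_k|$ equals $G(|\boldsymbol{v}|,-\log(1-t))$, which is finite since $-\log(1-t)<\rho$. As before this forces $\boldsymbol{u}\in\mathcal{G}$ and, after interchanging sums, $G(\boldsymbol{u},z)=\sum_k v_k\log^k(z+1)/k!=G(\boldsymbol{v},\log(z+1))$ near the origin. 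Once both sequences are known to lie in $\mathcal{G}$, the two formulas in \eqref{eq4.21} are anyway equivalent via the substitution $z\leftrightarrow e^z-1$, which is analytic and invertible near $0$, so both hold.

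The only genuinely delicate point is the legitimacy of the double-sum rearrangements, and the remarks above — nonnegativity of $S(n,k)$ and $|s(n,k)|$, together with the closed forms $G(|\boldsymbol{u}|,e^t-1)$ and $G(|\boldsymbol{v}|,-\log(1-t))$ for the majorant series — reduce this to Tonelli's theorem on a suitable disc. One could instead majorize $S(n,k)\le\binom{n}{k}k^{n-k}$ and $|s(n,k)|=\binom{n}{k}\mathds{E}(S_k^\star)^{n-k}$ via Lemma~\ref{l4.5}, but the generating-function majorants are sharper and lead directly to the clean radii $\log(1+r)$ and $1-e^{-\rho}$.
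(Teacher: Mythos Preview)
Your proof is correct and follows essentially the same generating-function route as the paper: expand $G(\boldsymbol{u},e^z-1)$ via \eqref{eq4.14}, interchange the double sum, and read off $G(\boldsymbol{v},z)$, then argue symmetrically for the other direction. The paper's argument is terser and does not spell out the Tonelli justification (via the nonnegativity of $S(n,k)$ and $|s(n,k)|$) or the explicit radii $\log(1+r)$ and $1-e^{-\rho}$, which you supply.
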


\begin{proof}
Observe that $u_0=v_0\neq 0$. Suppose that $\boldsymbol{u}\in \mathcal{G}$ with radius $r>0$. For $|e^z-1|\leq r$, we have from \eqref{eq4.14} and \eqref{eq4.20}
\begin{equation*}
G(\boldsymbol{u},e^z-1)=\sum_{k=0}^\infty u_k \sum_{n=k}^\infty S(n,k)\dfrac{z^n}{n!}=\sum_{n=0}^\infty \dfrac{z^n}{n!}\sum_{k=0}^n S(n,k) u_k=G(\boldsymbol{v},z).
\end{equation*}
The second equality in \eqref{eq4.21} is similarly shown. The proof is complete.\qed
\end{proof}

\section{Main results}\label{s5}

The following result characterizes Appell sequences in terms of forward difference transformations of the identity.

\begin{theorem}\label{th5.7}
$A(x)\in \mathcal{A}$ if and only if there exists a unique $\boldsymbol{a}\in \mathcal{G}$ such that
\begin{equation}\label{eq5.22}
A(x)=L_{\boldsymbol{a}}I(x).
\end{equation}
Such $\boldsymbol{a}\in \mathcal{G}$, called the sequence associated to $A(x)$, is given by the equivalent equations
\begin{equation}\label{eq5.23}
a_n=\sum_{k=0}^n s(n,k) A_k(0)\quad \Leftrightarrow \quad A_n(0)=\sum_{k=0}^n S(n,k) a_k.
\end{equation}

As a consequence, we have the explicit expressions
\begin{equation}\label{eq5.24}
A_n(x)=\sum_{k=0}^n \dfrac{a_k}{k!}\Delta^k I_n(x)=\sum_{k=0}^n \binom{n}{k}a_k \mathds{E}(x+S_k)^{n-k}.
\end{equation}
\end{theorem}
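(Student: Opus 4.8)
The plan is to reduce the statement to a single comparison of generating functions, relying on Theorem~\ref{th3.2} for the effect of $L_{\boldsymbol{a}}$ on generating functions and on Lemma~\ref{l4.6} for the Stirling inversion. The ``if'' half of the first assertion is immediate: Theorem~\ref{th3.2} already says that $L_{\boldsymbol{a}}I(x)\in\mathcal{A}$ for every $\boldsymbol{a}\in\mathcal{G}$. So the real content is the ``only if'' half together with uniqueness and formulas \eqref{eq5.23}--\eqref{eq5.24}.

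First I would note the trivial identity $I(0)=(I_n(0))_{n\geq 0}=(1,0,0,\dots)=\boldsymbol{e}$, whence $G(I(0),z)=1$. Specializing \eqref{eq3.11} to $A(x)=I(x)$ then gives $G(L_{\boldsymbol{a}}I(x),z)=G(\boldsymbol{a},e^{z}-1)\,e^{xz}$ for $|z|$ small. Now, given $A(x)\in\mathcal{A}$ (so that $A(0)\in\mathcal{G}$ by definition), I would \emph{define} $\boldsymbol{a}$ by the first equation in \eqref{eq5.23}, apply Lemma~\ref{l4.6} with $\boldsymbol{u}=\boldsymbol{a}$ and $\boldsymbol{v}=A(0)$ — the hypothesis of that lemma is precisely \eqref{eq4.20}, which is what makes the two equations in \eqref{eq5.23} equivalent — and read off both that $\boldsymbol{a}\in\mathcal{G}$ and that $G(\boldsymbol{a},e^{z}-1)=G(A(0),z)$. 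Combining this with the previous display and with \eqref{eq2.5},
\begin{equation*}
G(L_{\boldsymbol{a}}I(x),z)=G(A(0),z)\,e^{xz}=G(A(x),z),
\end{equation*}
and since an Appell sequence is determined by its generating function (again \eqref{eq2.5}), this forces $A(x)=L_{\boldsymbol{a}}I(x)$.

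For uniqueness, if $L_{\boldsymbol{a}}I(x)=L_{\boldsymbol{b}}I(x)$ then, evaluating their generating functions at $x=0$, $G(\boldsymbol{a},e^{z}-1)=G(\boldsymbol{b},e^{z}-1)$ for $|z|$ small; the substitution $w=e^{z}-1$ (equivalently, the second identity in \eqref{eq4.21}) gives $G(\boldsymbol{a},w)=G(\boldsymbol{b},w)$ near $0$, hence $\boldsymbol{a}=\boldsymbol{b}$. Finally, the explicit expressions \eqref{eq5.24} require no new work: the first equality is Definition~\ref{d3.1} written out for $A(x)=I(x)$, and the second is the rightmost expression in \eqref{eq3.10} together with $I_{n-k}(x+S_k)=(x+S_k)^{n-k}$.

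I do not anticipate a genuine obstacle: all the analytic subtleties about radii of convergence and membership in $\mathcal{G}$ have been packaged into Lemma~\ref{l4.6}, and the combinatorics is one pass of Stirling inversion \eqref{eq4.20}. The only point demanding care is bookkeeping in the application of Lemma~\ref{l4.6} — matching $\boldsymbol{u}\leftrightarrow\boldsymbol{a}$ and $\boldsymbol{v}\leftrightarrow A(0)$ in the correct order so that the resulting identity is $G(\boldsymbol{a},e^{z}-1)=G(A(0),z)$ and not its inverse — together with keeping every manipulation inside a common small disc $|z|\le\rho$ on which all the series in play converge.
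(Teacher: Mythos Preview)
Your proposal is correct and follows essentially the same route as the paper: both reduce \eqref{eq5.22} to the generating-function equation $G(A(0),z)=G(\boldsymbol{a},e^{z}-1)$ via Theorem~\ref{th3.2} (with $G(I(0),z)=1$), then invoke Lemma~\ref{l4.6} and the Stirling inversion \eqref{eq4.20} to obtain existence, uniqueness, and the formulas \eqref{eq5.23}. Your write-up is simply more explicit than the paper's---spelling out the ``if'' direction, the uniqueness argument via the substitution $w=e^{z}-1$, and the bookkeeping in applying Lemma~\ref{l4.6}---but there is no methodological difference.
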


\begin{proof}
Let $A(x)\in \mathcal{A}$. We pose the equation
\begin{equation*}
G(A(x),z)=G(L_{\boldsymbol{a}}I(x),z).
\end{equation*}
From \eqref{eq2.5} and \eqref{eq3.11}, this equation has the form
\begin{equation}\label{eq5.25}
G(A(0),z)=G(\boldsymbol{a},e^z-1).
\end{equation}
Therefore, by \eqref{eq4.20} and Lemma~\ref{l4.6}, equation \eqref{eq5.25} has a unique solution $\boldsymbol{a}\in \mathcal{G}$ given by \eqref{eq5.23}. Formula \eqref{eq5.24} readily follows from \eqref{eq5.22} and Definition~\ref{d3.1}. The proof is complete.\qed
\end{proof}

Dealing with specific examples, the interesting question is to find a closed form expression of the associated sequence $\boldsymbol{a}$. In this respect, the behaviour of such associated sequences under certain transformations of Appell polynomials is described in the following two results.

\begin{theorem}\label{th5.8}
Let $A(x)$ and $C(x)$ be Appell polynomials with associated sequences $\boldsymbol{a}$ and $\boldsymbol{c}$, respectively. Then,
\begin{enumerate}[(a)]
\item $(A\times C)(x)$ has associated sequence $\boldsymbol{a}\times \boldsymbol{c}$.
\item The inverse of $A(x)$ in $(\mathcal{A},\times)$ has the inverse of $\boldsymbol{a}$ in $(\mathcal{G},\times)$ as associated sequence.
\item For any $\boldsymbol{b}\in \mathcal{G}$, $L_{\boldsymbol{b}}A(x)$ has associated sequence $\boldsymbol{a}\times \boldsymbol{b}$.
\end{enumerate}
\end{theorem}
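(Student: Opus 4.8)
The plan is to reduce all three statements to the generating-function identity obtained in the proof of Theorem~\ref{th5.7}: $\boldsymbol{a}\in\mathcal{G}$ is the sequence associated to $A(x)\in\mathcal{A}$ if and only if $G(A(0),z)=G(\boldsymbol{a},e^z-1)$, together with the uniqueness asserted there. Combined with the multiplicativity \eqref{eq2.2}--\eqref{eq2.6} of generating functions under binomial convolution, and with the description of $L_{\boldsymbol{b}}$ as multiplication by $L_{\boldsymbol{b}}I(x)$ coming from Corollary~\ref{cor3.4}, each part collapses to a one-line computation; the only thing to keep an eye on is that every sequence produced still lies in $\mathcal{G}$, which is automatic since $(\mathcal{G},\times)$ is a group and Lemma~\ref{l4.6} shows that the passage $G(\cdot,z)\leftrightarrow G(\cdot,e^z-1)$ preserves membership in $\mathcal{G}$.

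For part (a), I would start from $G((A\times C)(0),z)=G(A(0),z)\,G(C(0),z)$ (a consequence of \eqref{eq2.6}), substitute the characterizing identity \eqref{eq5.25} for $A$ and for $C$, and then apply \eqref{eq2.2} evaluated at the argument $e^z-1$, obtaining $G((A\times C)(0),z)=G(\boldsymbol{a},e^z-1)\,G(\boldsymbol{c},e^z-1)=G(\boldsymbol{a}\times\boldsymbol{c},e^z-1)$. Since $\boldsymbol{a}\times\boldsymbol{c}\in\mathcal{G}$, the uniqueness part of Theorem~\ref{th5.7} identifies $\boldsymbol{a}\times\boldsymbol{c}$ as the sequence associated to $(A\times C)(x)$. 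Part (b) is then immediate: writing $\widetilde A(x)$ for the inverse of $A(x)$ in $(\mathcal{A},\times)$ and $\boldsymbol{b}$ for its associated sequence, the identity $(A\times\widetilde A)(x)=I(x)$ together with the fact that $I(x)=L_{\boldsymbol{e}}I(x)$ has associated sequence $\boldsymbol{e}$ gives, via part (a), $\boldsymbol{a}\times\boldsymbol{b}=\boldsymbol{e}$, i.e.\ $\boldsymbol{b}$ is the inverse of $\boldsymbol{a}$ in $(\mathcal{G},\times)$. (One could equally argue directly from $G(\widetilde A(0),z)=G(A(0),z)^{-1}=G(\boldsymbol{a},e^z-1)^{-1}$.)

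For part (c), I would invoke the last display of Section~\ref{s3} (equivalently Corollary~\ref{cor3.4} with $C=I$), namely $L_{\boldsymbol{b}}A(x)=(A\times L_{\boldsymbol{b}}I)(x)$, together with the observation that $L_{\boldsymbol{b}}I(x)$ has associated sequence $\boldsymbol{b}$ itself — this is exactly \eqref{eq5.22} read with the uniqueness of Theorem~\ref{th5.7}. Part (a) then yields that $L_{\boldsymbol{b}}A(x)$ has associated sequence $\boldsymbol{a}\times\boldsymbol{b}$; alternatively, since $A(x)=L_{\boldsymbol{a}}I(x)$, Corollary~\ref{cor3.3} with $C=I$ gives $L_{\boldsymbol{b}}A(x)=L_{\boldsymbol{b}}L_{\boldsymbol{a}}I(x)=L_{\boldsymbol{a}\times\boldsymbol{b}}I(x)$, which again identifies the associated sequence by Theorem~\ref{th5.7}.

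Honestly there is no serious obstacle here: the content of the statement is that ``taking the associated sequence'' is a group isomorphism intertwining $\times$ on $\mathcal{A}$ with $\times$ on $\mathcal{G}$ and conjugating each $L_{\boldsymbol{b}}$ into multiplication by $\boldsymbol{b}$, and this is built into \eqref{eq5.25}. The only mild care needed is bookkeeping — consistently appealing to the uniqueness in Theorem~\ref{th5.7} rather than merely matching generating functions formally, and noting membership in $\mathcal{G}$ at each step — so I expect the write-up to be short.
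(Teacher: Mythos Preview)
Your proposal is correct and follows essentially the same route as the paper. The paper argues part (a) by writing $A(x)=L_{\boldsymbol a}I(x)$, $C(x)=L_{\boldsymbol c}I(x)$ and invoking Corollary~\ref{cor3.3} to get $(L_{\boldsymbol a}I\times L_{\boldsymbol c}I)(x)=L_{\boldsymbol a\times\boldsymbol c}I(x)$, which is just your generating-function computation packaged through that corollary; parts (b) and (c) are handled exactly as in your outline (part (b) via part (a) applied to $(A\times\widetilde A)(x)=I(x)$, and part (c) via Corollary~\ref{cor3.3} with $C=I$, which is precisely your ``alternative'' argument).
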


\begin{proof}
By \eqref{eq5.22} and Corollary~\ref{cor3.3}, we have
\begin{equation*}
(A\times C)(x)=(L_{\boldsymbol{a}}I\times L_{\boldsymbol{c}}I)(x)=L_{\boldsymbol{a}\times \boldsymbol{c}}I(x),
\end{equation*}
which shows part (a). To show (b), let $C(x)$ be the inverse of $A(x)$ with associated sequence $\boldsymbol{c}$. Again by Corollary~\ref{cor3.3}, we have
\begin{equation*}
I(x)=(A\times C)(x)=(L_{\boldsymbol{a}}I\times L_{\boldsymbol{c}}I)(x)=L_{\boldsymbol{a}\times \boldsymbol{c}}I(x),
\end{equation*}
which implies that $\boldsymbol{a}\times \boldsymbol{c}=\boldsymbol{e}$. Finally, part (c) follows from \eqref{eq5.22} and Corollary~\ref{cor3.3} with $C=I$.\qed
\end{proof}

Let $Y$ be a random variable such that
\begin{equation}\label{eq5.33}
Ee^{r|Y|}<\infty,
\end{equation}
for some $r>0$. In \cite{AdeLek2017.2}, we have considered the following transformation based on expectations. Given $A(x)\in \mathcal{A}$, we define the Appell sequence $R_YA(x)=(R_YA_n(x))_{n\geq 0}$ as
\begin{equation}\label{eq5.34}
  R_YA_n(x)=\mathds{E}A_n(x+Y)=\sum_{k=0}^n \binom{n}{k}A_k(0)\mathds{E}(x+Y)^{n-k}.
\end{equation}
The transformation $R_Y:\mathcal{A}\to \mathcal{A}$ is actually a multiplier (c.f. \cite[Proposition 5.1]{AdeLek2017.2}). The transformation of the identity $I(x)$ is given by
\begin{equation}\label{eq5.35}
  R_YI(x)=(\mathds{E}(x+Y)^n)_{n\geq 0}.
\end{equation}
Observe that, for any $A(x)\in \mathcal{A}$, we have from \eqref{eq5.34} and \eqref{eq5.35}
\begin{equation}\label{eq5.36}
  R_YA(x)= (A\times R_YI)(x).
\end{equation}
It turns out (cf. \cite[Proposition 5.1]{AdeLek2017.2}) that $R_YA(x)$ is an Appell sequence characterized by its generating function
\begin{equation}\label{eq5.37}
  G(R_YA(x),z)=G(A(0),z)\mathds{E}e^{z(x+Y)}.
\end{equation}
With these notations, we state the following.

\begin{theorem}\label{th5.9}
The Appell sequence $R_YI(x)$ has associated sequence $\boldsymbol{y}=(y_n)_{n\geq 0}$ given by $y_n=\mathds{E}(Y)_n$. As a consequence,
\begin{equation}\label{eq5.38}
  R_YI_n(x)=\mathds{E}(x+Y)^n=\sum_{k=0}^n \mathds{E}\binom{Y}{k}\Delta^kI_n(x).
\end{equation}

Moreover, if $A(x)\in \mathcal{A}$ has associated sequence $\boldsymbol{a}$, then $R_YA(x)$ has associated sequence $\boldsymbol{a}\times \boldsymbol{y}$.
\end{theorem}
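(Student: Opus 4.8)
The plan is to prove Theorem~\ref{th5.9} by identifying generating functions, which is the natural strategy given that every earlier result in this section (Theorem~\ref{th5.7}, Theorem~\ref{th5.8}) is proved this way. First I would recall from Theorem~\ref{th5.7} that the sequence $\boldsymbol{y}$ associated to $R_YI(x)$ is the unique element of $\mathcal{G}$ satisfying $G(R_YI(0),z)=G(\boldsymbol{y},e^z-1)$. Now $R_YI(0)=(\mathds{E}Y^n)_{n\geq 0}$ by \eqref{eq5.35}, so $G(R_YI(0),z)=\mathds{E}e^{zY}$ by \eqref{eq5.37} with $A=I$ and $x=0$. Hence I need to exhibit a sequence $\boldsymbol{y}$ with $G(\boldsymbol{y},e^z-1)=\mathds{E}e^{zY}$, equivalently $G(\boldsymbol{y},w)=\mathds{E}(w+1)^Y$ after the substitution $w=e^z-1$.

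The key computation is then to expand $\mathds{E}(w+1)^Y$ using the binomial series $(w+1)^Y=\sum_{k=0}^\infty \binom{Y}{k}w^k$ and interchange sum and expectation (justified for $|w|$ small using the moment condition \eqref{eq5.33}, which controls $\mathds{E}|Y|_k$-type quantities and hence $\mathds{E}|\binom{Y}{k}|$), giving $\mathds{E}(w+1)^Y=\sum_{k=0}^\infty \mathds{E}\binom{Y}{k}w^k$. Comparing with $G(\boldsymbol{y},w)=\sum_{k\geq 0}y_k w^k/k!$ yields $y_k/k!=\mathds{E}\binom{Y}{k}$, i.e. $y_k=\mathds{E}\big(k!\binom{Y}{k}\big)=\mathds{E}(Y)_k$. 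One should check $\boldsymbol{y}\in\mathcal{G}$: $y_0=1\neq 0$, and the required growth bound on $|y_n|r^n/n!=|\mathds{E}\binom{Y}{n}|r^n$ follows from \eqref{eq5.33} together with Lemma~\ref{l4.6} (since $\boldsymbol{y}$ corresponds under \eqref{eq4.20} to the moment sequence $\mathds{E}Y^n$, which lies in $\mathcal{G}$ by \eqref{eq5.33}). By uniqueness in Theorem~\ref{th5.7}, this $\boldsymbol{y}$ is exactly the associated sequence. Formula \eqref{eq5.38} is then immediate from \eqref{eq5.24} in Theorem~\ref{th5.7} applied with $\boldsymbol{a}=\boldsymbol{y}$, rewriting $y_k/k!$ as $\mathds{E}\binom{Y}{k}$.

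For the final assertion, I would argue purely algebraically: by \eqref{eq5.36}, $R_YA(x)=(A\times R_YI)(x)$, and $A(x)$ has associated sequence $\boldsymbol{a}$ while $R_YI(x)$ has associated sequence $\boldsymbol{y}$ by the first part; Theorem~\ref{th5.8}(a) then gives that $R_YA(x)$ has associated sequence $\boldsymbol{a}\times\boldsymbol{y}$. Alternatively one can invoke Theorem~\ref{th5.8}(c) since $R_Y=L_{\boldsymbol{y}}$ as operators on $\mathcal{A}$ — indeed both multipliers are determined by their action on $I(x)$, and they agree there by the first part of the theorem. Either route is a one-line deduction.

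The main obstacle is the analytic bookkeeping around radii of convergence: I must make sure the substitution $w=e^z-1$ is legitimate on a common disk, that the interchange of $\mathds{E}$ and $\sum_k$ in expanding $\mathds{E}(w+1)^Y$ is valid (this is where \eqref{eq5.33} is genuinely used, via a dominated-convergence / Fubini argument bounding $\sum_k \mathds{E}|\binom{Y}{k}|\,|w|^k$), and that the resulting $\boldsymbol{y}$ truly lies in $\mathcal{G}$ so that Theorem~\ref{th5.7} applies — but all of this is routine and parallels the estimates already carried out in the proof of Theorem~\ref{th3.2} and Lemma~\ref{l4.6}, so it can be dispatched by citing \eqref{eq5.33} and Lemma~\ref{l4.6} rather than redoing the estimates in detail.
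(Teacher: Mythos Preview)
Your proposal is correct and follows essentially the same route as the paper: both arguments expand $\mathds{E}e^{zY}=\mathds{E}(1+(e^z-1))^Y$ via the binomial series to identify $G(\boldsymbol{y},e^z-1)$ with $y_k=\mathds{E}(Y)_k$, and then deduce the last assertion from \eqref{eq5.36} together with Theorem~\ref{th5.8}(a). Your write-up is somewhat more explicit about the analytic bookkeeping (membership of $\boldsymbol{y}$ in $\mathcal{G}$, justification of the interchange of sum and expectation), but the underlying argument is identical to the paper's.
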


\begin{proof}
Let $z\in \mathds{C}$ with $|e^z-1|<1$. Using the binomial expansion, we get
\begin{equation*}
  \mathds{E}e^{zY}=\mathds{E}(e^z-1+1)^Y=\sum_{n=0}^\infty \mathds{E}\binom{Y}{n}(e^z-1)^n=G(\boldsymbol{y},e^z-1).
\end{equation*}
This implies, by virtue of Theorem~\ref{th3.2} and \eqref{eq5.37}, that $R_YI(x)$ and $L_{\boldsymbol{y}}I(x)$ have the same generating function. Therefore, $R_YI(x)=L_{\boldsymbol{y}}I(x)$, thus showing \eqref{eq5.38}. The final statement in Theorem~\ref{th5.9} follows from Theorem~\ref{th5.8}(a) and \eqref{eq5.36}. The proof is complete.\qed
\end{proof}

\begin{remark}\label{rem5.4}
Choosing $Y=y\in \mathds{R}$ in \eqref{eq5.38}, we obtain
\begin{equation*}
  (x+y)^n=\sum_{k=0}^n \binom{y}{k}\Delta^k I_n(x).
\end{equation*}
In view of \eqref{eq4.13} and \eqref{eq4.19}, this formula can be seen as a generalization of the classical identity defining the Stirling numbers $S(n,k)$ of the second kind.
\end{remark}

\section{Generalized Bernoulli polynomials of real order}\label{s6}

Let $Y$ be a random variable such that
\begin{equation}\label{eq6.35.1}
  r\mathds{E}|Y|e^{r|Y|}<1,
\end{equation}
for some $r>0$. Observe that condition \eqref{eq6.35.1} is more restrictive than condition \eqref{eq5.33}. Also, let $(Y_j)_{j\geq 1}$ be a sequence of independent copies of $Y$ and denote by
\begin{equation*}
  W_k=Y_1+\cdots + Y_k,\qquad k\in \mathds{N},\quad  (W_0=0).
  \end{equation*}
In a recent paper (cf. \cite{AdeLek2018}), we have shown that for any real $t$ we have
\begin{equation}\label{eq6.35.2}
  (\mathds{E}e^{zY})^t=G(\boldsymbol{y(t)},z),
\end{equation}
where $\boldsymbol{y(t)}=(y_n(t))_{n\geq 0}\in \mathcal{G}$ is given by
\begin{equation}\label{eq6.35.3}
  y_n(t)=\sum_{k=0}^n \binom{t}{k} \binom{n-t}{n-k}\mathds{E}W_k^n.
\end{equation}
Note that if $t=m\in \mathds{N}$, formula \eqref{eq6.35.3} takes on the simple form
\begin{equation}\label{eq6.35.4}
  y_n(m)=\mathds{E}W_m^n.
\end{equation}
On the other hand, let $t\in \mathds{R}$. The generalized Bernoulli polynomials $B(t;x)$ of order $t$ are defined by means of the generating function
\begin{equation}\label{eq6.35.5}
  G(B(t;x),z)=\left ( \dfrac{z}{e^z-1}\right )^t e^{xz}.
\end{equation}
For $t=1$, $B(x):=B(1;x)$ are the classical Bernoully polynomials. We give the following closed form expression for $B(t;x)$.

\begin{theorem}\label{th6.1}
Let $t\in \mathds{R}$. Then, $B(t;x)$ has associated sequence $\boldsymbol{b(t)}=(b_n(t))_{n\geq 0}$ given by
\begin{equation}\label{eq6.35.6}
  b_n(t)=\sum_{k=0}^n \dfrac{\binom{t}{k} \binom{n-t}{n-k}}{\binom{k+n}{n}}\, s(k+n,k),
\end{equation}
$s(n,k)$ being the Stirling numbers of the first kind.

In particular, for $t=m\in \mathds{N}$, we have
\begin{equation}\label{eq6.35.7}
  b_n(m)= \dfrac{s(m+n,m)}{\binom{m+n}{n}}.
\end{equation}
\end{theorem}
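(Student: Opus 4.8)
The plan is to identify $\boldsymbol{b(t)}$ by matching generating functions, combining the probabilistic representation of the Stirling numbers in Lemma~\ref{l4.5} with the power formula \eqref{eq6.35.2}. By Theorem~\ref{th5.7} it suffices to exhibit some $\boldsymbol{b(t)}\in\mathcal{G}$ with
\begin{equation*}
G(\boldsymbol{b(t)},e^z-1)=G(B(t;0),z)=\left(\dfrac{z}{e^z-1}\right)^t ,
\end{equation*}
equivalently (replacing $z$ by $\log(1+z)$) with $G(\boldsymbol{b(t)},z)=\left(\log(1+z)/z\right)^t$, and then to check that the resulting sequence is exactly the one given by \eqref{eq6.35.6}; the uniqueness assertion in Theorem~\ref{th5.7} does the rest.

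First I would take the random variable $Y=-S_1^\star=-U_1T_1$, with $U_1,T_1$ as in \eqref{eq3.8} and \eqref{eq4.15}. Integrating first in the exponential variable and then in the uniform variable gives, for $|z|$ small,
\begin{equation*}
\mathds{E}e^{zY}=\int_0^1\int_0^\infty e^{-zu\theta}e^{-\theta}\,d\theta\,du=\int_0^1\dfrac{du}{1+zu}=\dfrac{\log(1+z)}{z},
\end{equation*}
and $Y$ satisfies the moment condition \eqref{eq6.35.1} (bound $U_1T_1$ by $T_1$ and use $\mathds{E}T_1e^{rT_1}=(1-r)^{-2}$ for $r<1$, so that $r\,\mathds{E}|Y|e^{r|Y|}\to 0$ as $r\to 0^+$). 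Hence \eqref{eq6.35.2}, applied with this $Y$, yields $\left(\log(1+z)/z\right)^t=G(\boldsymbol{y(t)},z)$ with
\begin{equation*}
y_n(t)=\sum_{k=0}^n\binom{t}{k}\binom{n-t}{n-k}\mathds{E}W_k^n,\qquad W_k=-(U_1T_1+\cdots+U_kT_k)=-S_k^\star .
\end{equation*}

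The key step is then to rewrite $\mathds{E}W_k^n=(-1)^n\mathds{E}(S_k^\star)^n$ via Stirling numbers of the first kind. Applying the first identity of Lemma~\ref{l4.5} with the indices $(k+n,k)$ in place of $(n,k)$ gives $s(k+n,k)=(-1)^n\binom{k+n}{k}\mathds{E}(S_k^\star)^n$, whence
\begin{equation*}
\mathds{E}W_k^n=(-1)^n\mathds{E}(S_k^\star)^n=\dfrac{s(k+n,k)}{\binom{k+n}{k}}=\dfrac{s(k+n,k)}{\binom{k+n}{n}} .
\end{equation*}
Plugging this into the formula for $y_n(t)$ shows $y_n(t)=b_n(t)$ exactly as in \eqref{eq6.35.6}, so $G(\boldsymbol{b(t)},z)=\left(\log(1+z)/z\right)^t$; replacing $z$ by $e^z-1$ gives $G(\boldsymbol{b(t)},e^z-1)=(z/(e^z-1))^t=G(B(t;0),z)$, and the uniqueness part of Theorem~\ref{th5.7} shows $\boldsymbol{b(t)}$ is the sequence associated to $B(t;x)$. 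For $t=m\in\mathds{N}$, formula \eqref{eq6.35.4} gives at once $b_n(m)=y_n(m)=\mathds{E}W_m^n=s(m+n,m)/\binom{m+n}{n}$, which is \eqref{eq6.35.7}.

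I expect the only genuinely delicate point to be spotting the representation $\log(1+z)/z=\mathds{E}e^{-zU_1T_1}$ — that is, recognizing that the order-$t$ Bernoulli weight is the $t$-th power of the moment generating function of $-S_1^\star$; once this is in hand, everything reduces to the already-established identities \eqref{eq6.35.2} and Lemma~\ref{l4.5} together with the harmless index shift $n\mapsto n+k$. Verifying the convergence condition \eqref{eq6.35.1} for $Y=-S_1^\star$ is routine but is needed for \eqref{eq6.35.2} to be applicable.
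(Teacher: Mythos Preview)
Your proof is correct and follows essentially the same route as the paper: both recognize $\log(1+z)/z=\mathds{E}e^{-zU_1T_1}$, invoke the power formula \eqref{eq6.35.2}--\eqref{eq6.35.3} to expand $(\log(1+z)/z)^t$, and then convert the moments $\mathds{E}(-S_k^\star)^n$ into Stirling numbers via Lemma~\ref{l4.5}. The only cosmetic differences are that you set $Y=-U_1T_1$ and substitute $z\mapsto e^z-1$ at the end, whereas the paper sets $Y=U_1T_1$ and substitutes $z\mapsto -(e^z-1)$ directly; you also include the (routine) verification of condition~\eqref{eq6.35.1}, which the paper leaves implicit.
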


\begin{proof}
Let $U$ and $T$ be two independent random variables such that $U$ is uniformly distributed on $[0,1]$ and $T$ has the exponential density given in \eqref{eq4.15}. Note that
\begin{equation*}
  \mathds{E}e^{-zUT}=\mathds{E}\dfrac{1}{1+zU}=\dfrac{\log (1+z)}{z},\qquad |z|<1,
\end{equation*}
thus implying that
\begin{equation}\label{eq6.35.8}
  \mathds{E}e^{-(e^z-1)UT}=\dfrac{z}{e^z-1},\qquad |e^z-1|<1.
\end{equation}
Assume that $|e^z-1|<1$. Choosing $Y=UT$ in \eqref{eq6.35.2} and \eqref{eq6.35.3} and taking into account \eqref{eq6.35.5} and \eqref{eq6.35.8}, we have
\begin{equation}\label{eq6.35.9}
  G(B(t;x),z)=\left (\mathds{E}e^{-(e^z-1)UT} \right )^te^{xz}=G(\boldsymbol{b(t)},e^z-1)e^{xz},
\end{equation}
where
\begin{equation}\label{eq6.35.10}
  b_n(t)=\sum_{k=0}^n \binom{t}{k}\binom{n-t}{n-k}\mathds{E}(-S^\star_k)^n=\sum_{k=0}^n \dfrac{\binom{t}{k}\binom{n-t}{n-k}}{\binom{k+n}{n}}s(k+n,k),
\end{equation}
thanks to \eqref{eq4.16} and Lemma~\ref{l4.5}. Therefore, the first statement in Theorem~\ref{th6.1} follows from Theorem~\ref{th3.2}, \eqref{eq6.35.9}, and \eqref{eq6.35.10}.

Finally, if $t=m\in \mathds{N}$, we have from \eqref{eq6.35.10} and Lemma~\ref{l4.5}
\begin{equation}\label{eq6.35.11}
  b_n(m)=\mathds{E}(-S_m^\star)^n= \dfrac{s(m+n,m)}{\binom{m+n}{m}}.
\end{equation}
This shows \eqref{eq6.35.7} and completes the proof.\qed
\end{proof}

The numbers $(b_n(m))_{n\geq 0}$ defined in \eqref{eq6.35.11} are called Daehee numbers of the first kind of order $m$ by Kim \textit{et al.} \cite{KimKimLeeSeo2014}. A similar result to that in Theorem~\ref{th6.1} can be found in the recent paper by Boutiche \textit{et al.} \cite{BouRahSri2017}.

Denote by
\begin{equation*}
H_k=\sum_{j=1}^k \dfrac{1}{j},\quad k\in \mathds{N},
\end{equation*}
the $k$th harmonic number.

\begin{theorem}\label{th6.11}
Let $m\in \mathds{N}$ and let $x_1,\ldots ,x_m\in \mathds{R}$ with $x_1+\cdots +x_m=x$. Then,
\begin{equation}\label{eq6.36}
\begin{split}
&\sum_{j_1+\cdots +j_m=n}\binom{n}{j_1,\ldots, j_m}B_{j_1}(x_1)\cdots B_{j_m}(x_m)=B_n(m;x)\\
&=\sum_{k=0}^n \dfrac{s(m+k,m)}{k! \binom{m+k}{m}}\Delta^k I_n(x).
\end{split}
\end{equation}

In addition, we have for $m=2$
\begin{equation}\label{eq6.37}
\sum_{k=0}^n \binom{n}{k}B_k(x_1)B_{n-k}(x_2)=B_n(2;x)=2\sum_{k=0}^n \dfrac{(-1)^kH_{k+1}}{k+2}\Delta^k I_n(x).
\end{equation}
\end{theorem}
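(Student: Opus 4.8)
The plan is to derive \eqref{eq6.36} from the machinery already in place and then specialize to $m=2$ to obtain \eqref{eq6.37}.

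First I would establish the convolution identity in \eqref{eq6.36}. By Lemma~\ref{l2.1} applied to the Appell sequences $A^{(k)}=B$ (the classical Bernoulli sequence), we have $B(x_1)\times\cdots\times B(x_m)=(B\times\cdots\times B)(x)$, and since the multiplicative structure of $(\mathcal{A},\times)$ corresponds to multiplying generating functions by \eqref{eq2.6}, the $m$-fold convolution $(B\times\cdots\times B)(x)$ has generating function $\bigl(z/(e^z-1)\bigr)^m e^{xz}$, which by \eqref{eq6.35.5} is exactly $G(B(m;x),z)$. Reading off the $n$th coefficient of the left-hand side of \eqref{eq2.7} via \eqref{eq2.1} gives the multinomial sum $\sum_{j_1+\cdots+j_m=n}\binom{n}{j_1,\ldots,j_m}B_{j_1}(x_1)\cdots B_{j_m}(x_m)$, so this equals $B_n(m;x)$. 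The second equality in \eqref{eq6.36} is then immediate from Theorem~\ref{th5.7}, formula \eqref{eq5.24}, using that the sequence associated to $B(m;x)$ is $\boldsymbol{b(m)}=(b_n(m))_{n\ge0}$ with $b_n(m)=s(m+n,m)/\binom{m+n}{m}$ by \eqref{eq6.35.7} of Theorem~\ref{th6.1}.

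Next I would specialize to $m=2$ and evaluate the coefficient explicitly. Here I need the closed form $b_k(2)=s(k+2,2)/\binom{k+2}{2}$. The key is the classical formula for unsigned Stirling numbers of the first kind, $|s(k+2,2)|=(k+1)!\,H_{k+1}$, equivalently $s(k+2,2)=(-1)^k (k+1)!\,H_{k+1}$ (this can be read off from the generating function $\log^2(z+1)/2=\sum_{n\ge2}s(n,2)z^n/n!$, or from Lemma~\ref{l4.5} by computing $\mathds{E}(S_2^\star)^k$ directly). Then
\begin{equation*}
  \frac{b_k(2)}{k!}=\frac{s(k+2,2)}{k!\binom{k+2}{2}}=\frac{(-1)^k(k+1)!\,H_{k+1}}{k!\cdot\frac{(k+2)(k+1)}{2}}=\frac{2(-1)^k H_{k+1}}{k+2},
\end{equation*}
which is precisely the coefficient appearing in \eqref{eq6.37}. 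Substituting this into \eqref{eq6.36} with $m=2$ yields the stated expression, and the left-hand side of \eqref{eq6.37} is just the $m=2$ case of the multinomial sum.

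The main obstacle I anticipate is the explicit evaluation $s(k+2,2)=(-1)^k(k+1)!\,H_{k+1}$, which is the only genuinely computational input beyond the abstract framework; everything else is a direct application of Lemma~\ref{l2.1}, Theorem~\ref{th5.7}, and Theorem~\ref{th6.1}. One clean way to get it is probabilistically: by Lemma~\ref{l4.5}, $s(k+2,2)=\binom{k+2}{2}\mathds{E}(-S_2^\star)^k=\binom{k+2}{2}(-1)^k\mathds{E}(U_1T_1+U_2T_2)^k$, and one computes $\mathds{E}(U_1T_1+U_2T_2)^k$ by expanding, using $\mathds{E}(UT)^j=\mathds{E}U^j\,\mathds{E}T^j=j!/(j+1)$ from \eqref{eq4.19.2}, giving $\sum_{j=0}^k\binom{k}{j}\frac{j!}{j+1}\frac{(k-j)!}{k-j+1}=k!\sum_{j=0}^k\frac{1}{(j+1)(k-j+1)}=k!\cdot\frac{2H_{k+1}}{k+2}$ after a partial-fraction telescoping; multiplying by $\binom{k+2}{2}$ recovers the claim. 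Alternatively one simply cites the standard identity for $s(n,2)$ in terms of harmonic numbers.
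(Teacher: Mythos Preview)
Your proposal is correct and follows essentially the same route as the paper: Lemma~\ref{l2.1} together with \eqref{eq2.6} and \eqref{eq6.35.5} for the first equality, Theorem~\ref{th6.1} plus \eqref{eq5.24} for the second, and for $m=2$ the paper performs exactly the probabilistic computation you outline (expanding $\mathds{E}(U_1T_1+U_2T_2)^k$ via \eqref{eq4.19.2} and the partial-fraction identity $\sum_{j=0}^k\frac{1}{(j+1)(k-j+1)}=\frac{2H_{k+1}}{k+2}$) to obtain $b_k(2)$ directly, rather than first isolating $s(k+2,2)$. The only cosmetic difference is that the paper computes $b_k(2)$ straight from the representation $b_n(m)=\mathds{E}(-S_m^\star)^n$ in \eqref{eq6.35.11} without passing through the Stirling number, but the arithmetic is identical to your Route~B.
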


\begin{proof}
As follows from \eqref{eq2.6} and \eqref{eq6.35.5}, we see that
\begin{equation}\label{eq6.38}
  G(B(m;x),z)=G((\stackrel{\stackrel{m}{\smile}}{B\times \cdots \times B})(x),z),
\end{equation}
which implies, by virtue of Lemma~\ref{l2.1}, that
\begin{equation*}
B(x_1)\times\cdots \times B(x_m)=(\stackrel{\stackrel{m}{\smile}}{B\times \cdots \times B})(x)=B(m;x).
\end{equation*}
This shows the first equality in \eqref{eq6.36}. On the other hand, Theorem~\ref{th6.1} states that $B(m;x)$ has associated sequence $\boldsymbol{b(m)}$ given by \eqref{eq6.35.11}. Therefore, the second equality in \eqref{eq6.36} follows from \eqref{eq5.24}.

For $m=2$, we have from \eqref{eq4.16}, \eqref{eq4.19.2} and \eqref{eq6.35.11}
\begin{equation*}
\begin{split}
&b_n(2)=(-1)^n \mathds{E}(U_1T_1+U_2T_2)^n=(-1)^n\sum_{j=0}^n \binom{n}{j}\mathds{E}U_1^j \mathds{E}T_1^j\mathds{E}U_2^{n-j}\mathds{E}T_2^{n-j}\\
&=(-1)^n n! \sum_{j=0}^n \dfrac{1}{(j+1)(n-j+1)}=\dfrac{(-1)^n n!}{n+2}\,2 H_{n+1}.
\end{split}
\end{equation*}
As above, this shows the second equality in \eqref{eq6.37} and completes the proof.\qed
\end{proof}

Setting $m=1$ in \eqref{eq6.36}, we obtain the explicit formula for the Bernoulli polynomials $B(x)$ mentioned in \eqref{eq1.1}. Also, choosing $x_1=\cdots =x_m=0$ in \eqref{eq6.36}, we obtain
\begin{equation*}
\sum_{j_1+\cdots +j_m=n}\binom{n}{j_1,\cdots, j_m}B_{j_1}(0)\cdots B_{j_m}(0)=\sum_{k=0}^n \dfrac{s(m+k,m)S(n,k)}{\binom{m+k}{m}},
\end{equation*}
as follows from \eqref{eq4.19}. Higher order convolution identities for Bernoulli polynomials were first given by Dilcher \cite{Dil1996}. For degenerate Bernoulli polynomials and for Apostol-Euler polynomials, we refer the reader to Wu and Pan \cite{WuPan2014} and He and Araci \cite{HeAra2014}, respectively. Wang \cite{Wan2013}, Agoh and Dilcher \cite{AgoDil2014}, He \cite{He2017}, and Dilcher and Vignat \cite{DilVig2016}, have obtained convolution identities with the multinomial coefficients replaced by other coefficients. The distinctive feature of Theorem~\ref{th6.11} with respect to the aforementioned results is that the right-hand sides in identities \eqref{eq6.36} and \eqref{eq6.37} do not contain the Bernoulli polynomials themselves.

\section{The generalized Apostol-Euler polynomials of real order}\label{s7}

Let $t\in \mathds{R}$ and $0\leq \beta \leq 1$. We consider the generalized Apostol-Euler polynomials $E(t,\beta;x)$ defined via their generating function as
\begin{equation}\label{eq7.43}
G(E(t,\beta;x),z)=\dfrac{e^{xz}}{(1+\beta (e^z-1))^t}.
\end{equation}
The Apostol-Euler polynomials and the classical Euler polynomials are defined, respectively, as
\begin{equation*}
E(\beta;x)=E(1,\beta;x),\qquad E(x)=E(1/2;x).
\end{equation*}
The particular form of the generating function in \eqref{eq7.43} leads us to an immediate application of Theorems~\ref{th3.2} and \ref{th5.7}.

\begin{theorem}\label{th7.1}
Let $t\in \mathds{R}$ and $0\leq \beta \leq 1$. Then, $E(t,\beta;x)$ has associated sequence $\boldsymbol{a(t)}=(a_n(t))_{n\geq 0}$ given by
\begin{equation}\label{eq7.42.1}
  a_n(t)=(-t)_n\beta^n.
\end{equation}
\end{theorem}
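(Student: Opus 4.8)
The plan is to compute the generating function of $E(t,\beta;x)$ directly and match it against the generating function of $L_{\boldsymbol{a(t)}}I(x)$ provided by Theorem~\ref{th3.2}. By Theorem~\ref{th5.7}, the associated sequence $\boldsymbol{a}$ of an Appell sequence $A(x)$ is the unique element of $\mathcal{G}$ satisfying $G(A(0),z)=G(\boldsymbol{a},e^z-1)$, so it suffices to exhibit a sequence $\boldsymbol{a(t)}\in\mathcal{G}$ whose generating function, evaluated at $e^z-1$, reproduces $G(E(t,\beta;0),z)=(1+\beta(e^z-1))^{-t}$.

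First I would observe that substituting $w=e^z-1$ reduces the problem to recognizing $(1+\beta w)^{-t}$ as a generating function of the form $G(\boldsymbol{a(t)},w)=\sum_{n\ge 0} a_n(t)\,w^n/n!$. The binomial series gives
\begin{equation*}
(1+\beta w)^{-t}=\sum_{n=0}^\infty \binom{-t}{n}\beta^n w^n=\sum_{n=0}^\infty \frac{(-t)(-t-1)\cdots(-t-n+1)}{n!}\,\beta^n w^n=\sum_{n=0}^\infty \frac{(-t)_n\,\beta^n}{n!}\,w^n,
\end{equation*}
using the rising/falling-factorial identity $\binom{-t}{n}n!=(-t)(-t-1)\cdots(-t-n+1)=(-t)_n$ in the notation $(x)_n=x(x-1)\cdots(x-n+1)$ of \eqref{eq4.13}. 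Hence $a_n(t)=(-t)_n\beta^n$ is exactly the candidate \eqref{eq7.42.1}, and $G(\boldsymbol{a(t)},e^z-1)=(1+\beta(e^z-1))^{-t}=G(E(t,\beta;0),z)$.

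It then remains to check the admissibility condition $\boldsymbol{a(t)}\in\mathcal{G}$, i.e. $a_0(t)\neq 0$ (clear, since $(-t)_0=1$) and $\sum_n |a_n(t)| r^n/n!<\infty$ for some $r>0$; this holds because the binomial series $\sum_n \binom{-t}{n}\beta^n w^n$ has radius of convergence at least $1$ when $0\le\beta\le 1$ (and is entire when $\beta=0$), so $|(1+\beta w)|^{-t}$ is finite for $|w|$ small, giving finiteness of $\sum_n |a_n(t)|\,r^n/n!$ once $|e^r-1|$ is small enough. With $\boldsymbol{a(t)}\in\mathcal{G}$ in hand, Theorem~\ref{th5.7} (or equivalently Theorem~\ref{th3.2} together with \eqref{eq2.5}) yields $E(t,\beta;x)=L_{\boldsymbol{a(t)}}I(x)$, so $\boldsymbol{a(t)}$ is the associated sequence. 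I do not expect any serious obstacle here: the only point requiring a line of care is the convergence bookkeeping that places $\boldsymbol{a(t)}$ in $\mathcal{G}$, and the recognition of the binomial coefficient $\binom{-t}{n}$ as $(-t)_n/n!$, which is purely formal.
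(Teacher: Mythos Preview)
Your proposal is correct and follows essentially the same approach as the paper: expand $(1+\beta(e^z-1))^{-t}$ via the binomial series to identify $G(\boldsymbol{a(t)},e^z-1)$ with $a_n(t)=(-t)_n\beta^n$, and then invoke Theorems~\ref{th3.2} and~\ref{th5.7}. The paper's own proof is a two-line version of exactly this argument; your additional care in verifying $\boldsymbol{a(t)}\in\mathcal{G}$ is a welcome detail the paper leaves implicit.
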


\begin{proof}
Suppose that $|e^z-1|<1/\beta$. Using the binomial expansion in \eqref{eq7.43}, we see that
\begin{equation*}
  G(E(t,\beta;x),z)=G(\boldsymbol{a(t)},e^z-1)e^{xz},
\end{equation*}
where $\boldsymbol{a(t)}\in \mathcal{G}$ is defined in \eqref{eq7.42.1}. Thus, the conclusion follows from Theorems~\ref{th3.2} and \ref{th5.7}.\qed
\end{proof}

We mention that Theorem~\ref{th7.1} has recently been obtained by Boutiche \textit{et al.} \cite{BouRahSri2017} for the generalized Euler polynomials $E(t,1/2;x)$ by using the generalized Stirling numbers $S_n^k(x)$ defined in \eqref{eq1.0.4}.

\begin{theorem}\label{th7.13}
Let $m\in \mathds{N}$ and $x_1,\ldots ,x_m \in \mathds{R}$ with $x_1+\cdots +x_m=x$. Then,
\begin{equation}\label{eq7.45}
\begin{split}
&\sum_{j_1+\cdots +j_m=n} \binom{n}{j_1,\ldots ,j_m} E_{j_1}(\beta; x_1)\cdots E_{j_m}(\beta ; x_m)\\
&=E_n(m,\beta; x)=\sum_{k=0}^n \binom{-m}{k}\beta^k \Delta^k I_n(x).
\end{split}
\end{equation}
\end{theorem}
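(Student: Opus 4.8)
The plan is to mirror the structure of the proof of Theorem~\ref{th6.11}, exploiting the multiplicativity of generating functions together with the associated-sequence machinery developed in Section~\ref{s5}. First I would observe from \eqref{eq7.43} that
\begin{equation*}
G(E(m,\beta;x),z)=\dfrac{e^{xz}}{(1+\beta(e^z-1))^{m}}=\prod_{i=1}^{m}\dfrac{e^{x_iz}}{1+\beta(e^z-1)}=\prod_{i=1}^{m}G(E(\beta;x_i),z),
\end{equation*}
using $x_1+\cdots+x_m=x$. By \eqref{eq2.2} and \eqref{eq2.6} this says that the binomial convolution $E(\beta;x_1)\times\cdots\times E(\beta;x_m)$ and the Appell sequence $E(m,\beta;x)$ share a generating function, hence are equal; combined with Lemma~\ref{l2.1} (which identifies $E^{(1)}(x_1)\times\cdots\times E^{(m)}(x_m)$ with the convolution evaluated at $x$) and the explicit form \eqref{eq2.1} of the convolution at level $n$, this yields the first equality in \eqref{eq7.45}, namely that the multinomial sum equals $E_n(m,\beta;x)$.

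Next I would identify the associated sequence of $E(m,\beta;x)$. Theorem~\ref{th7.1} applied with $t=m$ gives directly that $E(m,\beta;x)=L_{\boldsymbol{a(m)}}I(x)$ with $a_k(m)=(-m)_k\beta^k$. Since $(-m)_k=(-m)(-m-1)\cdots(-m-k+1)=k!\binom{-m}{k}$, we have $a_k(m)/k!=\binom{-m}{k}\beta^k$. Feeding this into the explicit expansion \eqref{eq5.24} of Theorem~\ref{th5.7},
\begin{equation*}
E_n(m,\beta;x)=\sum_{k=0}^{n}\dfrac{a_k(m)}{k!}\Delta^kI_n(x)=\sum_{k=0}^{n}\binom{-m}{k}\beta^k\Delta^kI_n(x),
\end{equation*}
which is exactly the second equality in \eqref{eq7.45}. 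So the whole theorem reduces to two citations plus the binomial-coefficient rewrite $(-m)_k=k!\binom{-m}{k}$.

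Realistically there is no serious obstacle here: the hard analytic work (convergence of the generating-function manipulations, the bijective correspondence between an Appell sequence and its associated sequence) is already packaged in Lemma~\ref{l2.1}, Theorem~\ref{th3.2}, Theorem~\ref{th5.7}, and Theorem~\ref{th7.1}. The only point requiring the slightest care is bookkeeping with the radius of convergence: the identity $G(E(m,\beta;x),z)=G(\boldsymbol{a(m)},e^z-1)e^{xz}$ is valid for $|e^z-1|<1/\beta$ (degenerating appropriately when $\beta=0$), and one should note this is a nonempty neighbourhood of $0$ so that the formal identity of generating functions forces equality of the sequences. Once that is remarked, the proof is a two-line assembly, and I would present it in roughly the same compact style as the proof of Theorem~\ref{th6.11}.
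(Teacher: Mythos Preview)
Your proposal is correct and follows essentially the same route as the paper: first identify the $m$-fold binomial convolution with $E(m,\beta;x)$ via the multiplicativity of generating functions (equations \eqref{eq2.6}, \eqref{eq7.43}, and Lemma~\ref{l2.1}), then read off the closed form from Theorem~\ref{th7.1} and \eqref{eq5.24}. The only difference is cosmetic---you spell out the generating-function product and the rewrite $(-m)_k/k!=\binom{-m}{k}$ explicitly, whereas the paper compresses this into a bare citation of Theorem~\ref{th7.1}.
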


\begin{proof}
It follows from \eqref{eq2.6} and \eqref{eq7.43} that
\begin{equation}\label{eq7.44}
(\stackrel{\stackrel{m}{\smile}}{E(\beta; \cdot)\times \cdots \times E(\beta; \cdot)})(x)=E(m,\beta; x).
\end{equation}
By Lemma~\ref{l2.1}, this implies that
\begin{equation*}
\stackrel{\stackrel{m}{\smile}}{E(\beta; x_1)\times \cdots \times E(\beta; x_m)}=E(m,\beta; x).
\end{equation*}
which, in conjunction with Theorem~\ref{th7.1}, shows the result.\qed
\end{proof}

Similar identities to that in Theorem~\ref{th7.13} were obtained by Dilcher \cite{Dil1996}, He and Araci \cite{HeAra2014}, and Dilcher and Vignat \cite{DilVig2016}(for degenerate Euler polynomials, we refer the reader to Wu and Pan \cite{WuPan2014}). As in Theorem~\ref{th6.11}, note that the right-hand side in formula \eqref{eq7.45} does not contain the Apostol-Euler polynomials.

Applying Theorem~\ref{th7.13} with $m=1$ and $\beta=1/2$, we obtain the explicit formula for the Euler polynomials stated in \eqref{eq1.1}.

\begin{theorem}\label{th7.14}
Let $m,r\in \mathds{N}$ and let $x_1,\ldots ,x_{m+r}\in \mathds{R}$ with $x_1+\cdots +x_{m+r}=x$. Then,
\begin{equation}\label{eq7.42.2}
  \begin{split}
     & \sum_{j_1+\cdots +j_{m+r}=n} \binom{n}{j_1,\cdots, j_{m+r}}B_{j_1}(x_1)\cdots B_{j_m}(x_m)E_{j_{m+1}}(\beta; x_{m+1})\cdots  E_{j_{m+r}}(\beta; x_{m+r})\\
      & = \left ( B_n(m,\cdot)\times E(r,\beta;\cdot)\right )(x)=\sum_{k=0}^n v_k \Delta^k I_n(x),
  \end{split}
\end{equation}
where
\begin{equation}\label{eq7.42.3}
  v_k=m!\sum_{j=0}^k \binom{-r}{j}\beta^j \dfrac{s(m+k-j,m)}{(m+k-j)!}.
\end{equation}
\end{theorem}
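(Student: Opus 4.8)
The plan is to combine the multiplicative structure of associated sequences established in Theorem~\ref{th5.8}(a) with the explicit formulas already computed in Theorems~\ref{th6.1} and~\ref{th7.1}. First I would observe, exactly as in the proofs of Theorem~\ref{th6.11} and Theorem~\ref{th7.13}, that by \eqref{eq2.6}, \eqref{eq6.35.5}, and \eqref{eq7.43} the generating function of the $(m+r)$-fold binomial convolution
\begin{equation*}
B(x_1)\times\cdots\times B(x_m)\times E(\beta;x_{m+1})\times\cdots\times E(\beta;x_{m+r})
\end{equation*}
equals $(z/(e^z-1))^m(1+\beta(e^z-1))^{-r}e^{xz}$, which is precisely $G((B(m;\cdot)\times E(r,\beta;\cdot))(x),z)$; invoking Lemma~\ref{l2.1} then yields the first equality in \eqref{eq7.42.2}. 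So the content is to identify the associated sequence of the Appell polynomial $(B(m;\cdot)\times E(r,\beta;\cdot))(x)$.

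Next I would apply Theorem~\ref{th5.8}(a): the associated sequence of a binomial convolution is the binomial convolution of the associated sequences. By Theorem~\ref{th6.1} (in the integer-order form \eqref{eq6.35.7}) the associated sequence of $B(m;\cdot)$ is $b_k(m)=s(m+k,m)/\binom{m+k}{m}$, and by Theorem~\ref{th7.1} the associated sequence of $E(r,\beta;\cdot)$ is $a_j(r)=(-r)_j\beta^j$. Hence the associated sequence $\boldsymbol{v}=(v_k)_{k\ge 0}$ of the convolution is
\begin{equation*}
v_k=\frac{1}{k!}(b(m)\times a(r))_k=\frac{1}{k!}\sum_{j=0}^k\binom{k}{j}a_j(r)\,b_{k-j}(m),
\end{equation*}
where the extra factor $1/k!$ comes from the normalization in \eqref{eq5.24} (the coefficient multiplying $\Delta^k I_n(x)$ is the associated-sequence entry divided by $k!$, whereas the group operation $\times$ is the binomial convolution \eqref{eq2.1}). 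Then I would just substitute: $\binom{k}{j}a_j(r)/k! = \binom{k}{j}(-r)_j\beta^j/k!$ and $(-r)_j/j! = \binom{-r}{j}$, while $b_{k-j}(m)/(k-j)! = s(m+k-j,m)/\big((k-j)!\binom{m+k-j}{m}\big) = m!\,s(m+k-j,m)/(m+k-j)!$. Collecting factors gives exactly $v_k = m!\sum_{j=0}^k\binom{-r}{j}\beta^j\, s(m+k-j,m)/(m+k-j)!$ as in \eqref{eq7.42.3}, and the second equality in \eqref{eq7.42.2} is then \eqref{eq5.24}.

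The only genuine obstacle is bookkeeping: keeping straight the distinction between the associated sequence $\boldsymbol{a}$ itself and the coefficients $a_k/k!$ that appear in front of $\Delta^k I_n(x)$, and correctly turning the binomial convolution of the two associated sequences into the stated single sum. One should double-check membership: $\boldsymbol{b(m)}\in\mathcal{G}$ and $\boldsymbol{a(r)}\in\mathcal{G}$ (immediate from Theorems~\ref{th6.1} and~\ref{th7.1}), so their convolution lies in $\mathcal{G}$ by closure of the group operation, and Theorem~\ref{th5.7} then guarantees \eqref{eq7.42.2} is a valid closed-form expansion. Everything else is the routine simplification of binomial coefficients and factorials sketched above; no new analytic input beyond the cited results is needed.
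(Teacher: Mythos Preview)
Your proposal is correct and follows essentially the same route as the paper's proof: establish the first equality via Lemma~\ref{l2.1} and the generating-function identities (the paper cites \eqref{eq6.38} and \eqref{eq7.44}, which amount to the same computation you do directly), then invoke Theorem~\ref{th5.8}(a) together with the associated sequences from Theorems~\ref{th6.1} and~\ref{th7.1}, and finally simplify the binomial convolution to obtain \eqref{eq7.42.3}. The only cosmetic slip is that you call $\boldsymbol{v}=(v_k)$ the ``associated sequence'' when in fact $v_k$ is the associated-sequence entry divided by $k!$; but you immediately clarify this, so no harm is done.
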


\begin{proof}
Using \eqref{eq6.38}, \eqref{eq7.44} and Lemma~\ref{l2.1}, we see that
\begin{equation*}
\begin{split}
  & B(x_1)\times \cdots \times B(x_m)\times E(\beta; x_{m+1})\times \cdots \times E(\beta; x_{m+r}) \\
    & =B(m,x_1+\cdots +x_m)\times E(r,\beta; x_{m+1}+\cdots +x_{m+r})\\
    & = \left ( B(m;\cdot)\times E(r,\beta;\cdot) \right ) (x),
\end{split}
\end{equation*}
thus showing the first equality in \eqref{eq7.42.2}. By Theorems~\ref{th6.1} and \ref{th7.1}, the Appell sequences $B(m;x)$ and $E(r,\beta;x)$ have associated sequences $\boldsymbol{b(m)}$ and $\boldsymbol{a(r)}$ defined in \eqref{eq6.35.7} and \eqref{eq7.42.1}, respectively. We therefore have from Theorem~\ref{th5.8}(a)
\begin{equation}\label{eq7.42.4}
  \left (B_n(m;\cdot)\times E_n(r,\beta;\cdot) \right )(x)=\sum_{k=0}^n \dfrac{(a(r)\times b(m))_k}{k!}\Delta^k I_n(x).
\end{equation}
Again by \eqref{eq6.35.7} and \eqref{eq7.42.1}, we have
\begin{equation*}
  \dfrac{1}{k!}(a(r)\times b(m))_k=\dfrac{1}{k!} \sum_{j=0}^k \binom{k}{j}a_j(r)b_{k-j}(m)=v_k,
\end{equation*}
as follows from \eqref{eq7.42.3}. This, together with \eqref{eq7.42.4}, shows the second equality in \eqref{eq7.42.2} and concludes the proof.\qed
\end{proof}

Wang \cite{Wan2013} obtained analogous identities to that in Theorem~\ref{th7.14} with the multinomial coefficients replaced by other coefficients, whereas He and Araci \cite{HeAra2014} considered sums of form \eqref{eq7.42.2} where the Bernoulli polynomials are replaced by Apostol-Bernoulli polynomials. Finally, observe that the right-hand side in \eqref{eq7.42.2} does not contain neither the polynomials $B_n(x)$ nor $E_n(\beta;x)$.

\begin{acknowledgements}
The authors are partially supported by Research Projects DGA (E-64), MTM2015-67006-P, and by FEDER funds.
\end{acknowledgements}

% BibTeX users please use one of
%\bibliographystyle{spbasic}      % basic style, author-year citations
\bibliographystyle{spmpsci}      % mathematics and physical sciences
%\bibliographystyle{spphys}       % APS-like style for physics
%\bibliography{}   % name your BibTeX data base

 \bibliography{AdellLekuonaArXiv2018R}

\end{document}